\documentclass[reqno]{amsart}
 
\usepackage{amsmath}
\usepackage{amsrefs}
\usepackage{amssymb}
\usepackage{mathrsfs}
\usepackage{enumitem}

\def\Z{\mathbb{Z}}
\def\S{\mathcal{S}}

\def\om{\omega}

\newcommand{\be}{\begin{equation}}
\newcommand{\ee}{\end{equation}}
\newcommand{\ba}{\begin{align}}
\newcommand{\ea}{\end{align}}

\newcommand{\abs}[1]{\lvert#1\rvert}

\DeclareMathOperator{\GA}{GA}

\newtheorem{theorem}{Theorem}[section]

\newtheorem{lemma}{Lemma}[section]

\title[Group determinants for $\mathbb{Z}_p^n$ ]{The Integer group determinants for  $\mathbb{Z}_p^n$}

\author[M. Mossinghoff]{Michael J.  Mossinghoff}
\address{Center for Communications Research, Princeton, NJ, USA}
\email{m.mossinghoff@ccr-princeton.org}

\author[C. Pinner]{Christopher Pinner}
\address{ Department of Mathematics\\
         Kansas State University\\
         Manhattan, KS 66506, USA}
\email{pinner@math.ksu.edu}

\keywords{Group determinant, Lind--Lehmer constant, Lind--Mahler measure}
\subjclass[2010]{Primary: 11C20, 15B36; Secondary:  11B83, 11C08, 11R06, 11R09, 11T22, 20H25, 20H30, 43A40}
\date{\today}

\begin{document}

\begin{abstract}
We  give new conditions on allowable powers of a prime $p$ dividing an integer group determinant for the group $\mathbb{Z}_p^n$, and show these conditions are sharp for $n\leq 5$.
The values coprime to $p$ for these groups are already known; determining which multiples of allowable powers of $p$ occur is much more complicated.
We show that all multiples of sufficiently large powers of $p$ occur as integer group determinants of $\mathbb{Z}_p^n$.
We also provide a complete characterization for the group $\mathbb{Z}_3^3$, where particular arithmetic conditions are required for multiples of certain powers of $3$.
\end{abstract}

\maketitle

\section{Introduction}\label{secIntroduction}

Recall that the \textit{group determinant} of a finite group $G=\{g_1,\ldots,g_n\}$ is a homogeneous polynomial of degree $n$ with integer coefficients in $n$ variables $\{x_1, \ldots, x_n\}$, defined as the determinant of an $n\times n$ matrix whose entry at row $i$ and column $j$ is $x_k$, where $g_i g_j^{-1} = g_k$ in $G$.
We denote the group determinant of $G$ by
\[
\mathscr{D}_G(x_1,\ldots,x_n).
\]
It is well known that the group determinant determines the group \cite{FormanekSibley}.
At an AMS meeting in 1977, Olga Taussky-Todd \cite{OTT} asked about the values achieved by the group determinant when it is evaluated at values in $\mathbb{Z}^n$.
She was particularly interested in the case when $G$ is a finite cyclic group, so in this case the question is to determine for a fixed $n$ all possible values of determinants of integer $n\times n$ circulant matrices.
Let $\mathcal{S}(G)$ denote the set of such values associated with a group $G$; we say $\mathcal{S}(G)$ is the set of \textit{integer group determinants} of $G$.
It is convenient to partition $\mathcal{S}(G)$ as
\[
\S(G)= \S^*(G)\cup \S_0(G),
\]
where $\S^*(G)$ are the values coprime to $\abs{G}=n$ and  $\S_0(G)$ are those divisible by a prime dividing $n$.

A number of results concerning $\mathcal{S}(G)$ are known for various groups $G$.
Much prior research falls into one of two broad categories: determining the smallest nontrivial value of $\mathcal{S}(G)$, and determining information on the set $\mathcal{S}(G)$, including characterizing its values.
Our work here focuses on the latter category, but we provide a brief overview of both topics here.

The first category is connected to an analogue for group rings of a well-known problem in analysis and number theory known as Lehmer's problem regarding the Mahler measure of a polynomial \cite{Lehmer}.
For a finite abelian group, the group determinant splits into linear factors, and its value coincides with Lind's 2005 generalization \cite{Lind} of the Mahler measure to an arbitrary compact abelian group.
This is defined in terms of the characters of the group, with the classical Mahler measure corresponding to the group $\mathbb R/\mathbb{Z}$, see for example \cites{Kaiblinger10,Kaiblinger12}.
For $F=\sum_{g\in G} a_g g \in \mathbb{Z}[G]$, the Lind--Mahler measure of $F$ is an integer group determinant for $G$,
\[
M_G(F) := \mathscr{D}_G(a_{g_1},\ldots ,a_{g_n}).
\]
The Lind--Lehmer problem for the group then corresponds to finding the smallest nontrivial element of $\S(G)$.
Results of this type for the groups $\mathbb{Z}_p \times \mathbb{Z}_{p^2}$, $\mathbb{Z}_p^k$, $\mathbb{Z}_2^r \times \mathbb{Z}_4^s$, $\mathbb{Z}_m$, and $\mathbb{Z}_m\times\mathbb{Z}_p^n$ with $p\nmid m$ can be found respectively in \cites{pgroups,DeSilvaPinner,MPP19,Pigno1,PPV16}, for various $3$-groups in \cite{CP3}, and for general finite abelian groups in \cite{MahoneyNewman}.
Here and throughout this article, we write $\mathbb{Z}_n$ for the cyclic group of order $n$, and $p$ denotes an odd prime number.

This realization of the Lind--Mahler measure in this setting as an integer group determinant allows extending the concept of this measure to non-abelian finite groups in a natural way.
The elements of $\mathbb{Z}[G]$  can be associated with elements from a  polynomial ring, with non-commuting monomials in the case of a non-abelian group, and the usual multiplicative property $M_G(f_1 f_2)=M_G(f_1)M_G(f_2)$ of the Mahler measure holds if we multiply and reduce polynomials using the group relations on the monomials.
The Lind--Lehmer problem for the group $G$ then becomes to determine
\begin{equation*}\label{LindLehmer}
\lambda(G):= \frac{1}{|G|} \log \min \left\{ \abs{M_G(F)} \geq 2 : F\in \mathbb{Z} [G]\right\}.
\end{equation*}
 
Work in the second category, regarding characterizing the values of $\mathcal{S}(G)$, dates to 1980, when Laquer \cite{Laquer} and Newman \cite{Newman1} proved that for a finite cyclic group $G$ all values coprime to $\abs{G}$ can be achieved as integer group determinants:
\begin{equation}\label{eqnSstarCyclic}
\mathcal{S}^*(\mathbb{Z}_n) = \{m\in \mathbb{Z} : \gcd(m,n)=1 \}.
\end{equation}
They also showed that some values do not occur in $\mathcal{S}_0(\mathbb{Z}_n)$: if $p^t \parallel n$, $p\mid m$ and $m\in\mathcal{S}_0(\mathbb{Z}_n)$, then necessarily $p^{t+1} \mid m$.
They proved that this condition is sufficient as well when $n=p$ or $n=2p$, so they obtain a full characterization in these cases:
\begin{equation}\label{newman}
\begin{split}
\mathcal{S}_0(\mathbb{Z}_p) &= p^2\mathbb{Z},\\
\mathcal{S}_0(\mathbb{Z}_{2p}) &= 4\mathbb{Z} \cup p^2\mathbb{Z}.
\end{split}
\end{equation}
Newman \cite{Newman2} proved that this restriction also suffices to characterize the case $n=9$,
\[
\mathcal{S}_0(\mathbb{Z}_9) = 27\mathbb{Z},
\]
but that it is not sufficient in general, proving that
\begin{equation}\label{eqnNewmanRestric}
p^{t+1}\not\in\mathcal{S}_0(\mathbb{Z}_p^t)
\end{equation}
for any $t\geq2$ when $p\geq5$.
In general, Newman showed that $\S_0(\mathbb{Z}_{p^t}) \subseteq p^{t+1}\mathbb{Z}$ for $t\geq2$.

In 2023, the authors \cite{MP23} proved that the smallest power of $p$ in $\mathcal{S}_0(\mathbb{Z}_{p^t})$ is always $p^{2t}$, so that
\[
p^{2t}\mathbb{Z}\subseteq \mathcal{S}_0(\mathbb{Z}_{p^t}) \subseteq p^{t+1}\mathbb{Z},
\]
and established some necessary conditions on integers $m$ so that $m p^j \in \mathcal{S}_0(\mathbb{Z}_{p^t})$ when $t<j<2t$.
While restrictive, those conditions are not expected to be sufficient, and in evidence of this in the same paper the authors determine characterizations of the sets $\mathcal{S}(\mathbb{Z}_{25})$ and $\mathcal{S}(\mathbb{Z}_{27})$, which are somewhat complicated to describe in full here.
Complete characterizations for $\mathcal{S}(G)$ are also known for all groups of order at most $19$ \cites{PP15,smallgps,YYC8C2,YYCirc16,YYC42,YYC24,YYC422,PPQ16,YYNA16,YY16,BPP24,BoerkoelPinner,PaudelPinnerZnxH,PaudelPinner25}, as well as for $S_4$ \cite{PinnerS4} and the abelian groups of order $24$ \cite{Panraksa24}.
The set $\mathcal{S}(G)$ has also been investigated for metacyclic groups \cite{Mahoney}, dihedral groups $D_{2n}$ \cite{BoerkoelPinner}, dicyclic groups $Q_{4n}$ \cite{dicyclic}, the Heisenberg groups $H(\mathbb{Z}_p)$ \cite{Heisenberg}, and the general affine group $\GA(1,p^k)$ of degree $1$ over the finite field $\mathbb{F}_{p^k}$ \cites{BPP25,OP26}.
In particular, these papers provide significant advances with respect to the three non-abelian groups of order $20$: complete characterizations are obtained in these respective works for $\mathcal{S}(D_{20})$ and $\mathcal{S}(\GA(1,5))$, and partial information is obtained regarding $\mathcal{S}(Q_{20})$.

The set $\mathcal{S}(G)$ of integer group determinants has also been studied for a number of families of noncyclic abelian $p$-groups $G$ \cites{Kaiblinger12,PaudelPinnerZnxH,YYLaquer23}.
Recently, complete characterizations were determined by Panraksa for $G=\mathbb{Z}_5^2$ and $\mathbb{Z}_7^2$ \cites{Panraksa25,Panraksa49}.

The present article forms a companion to \cite{MP23} by studying the integer group determinants of certain noncyclic abelian $p$-groups, especially $\mathbb{Z}_p^n$.
We determine improved restrictions on the permissible powers $p^j$ for which integers of the form $m p^j$ with $p\nmid m$ occur in $\mathcal{S}_0(\mathbb{Z}_p^n)$ for $n\geq2$, and show that these bounds are sharp for $n\leq 5$.
We also find similar conditions for other products of cyclic $p$-groups, such as $\mathbb{Z}_p^t\times \mathbb{Z}_p^{n-1}$ and $\mathbb{Z}_p^s\times \mathbb{Z}_p^t$.
In addition, we study certain special cases in greater detail, and determine a complete characterization for the first time of $\mathcal{S}(\mathbb{Z}_3^3)$.
We also note some further improvements for $\mathcal{S}_0(\mathbb{Z}_{11}^2)$ and $\mathcal{S}_0(\mathbb{Z}_{13}^2)$.

This paper is organized in the following way.
Section~\ref{dilum} describes our results for $\S_0(\Z_p^n)$.
Section~\ref{higherpowers} states some additional results for $\mathcal{S}_0(G)$ and $\mathcal{S}^*(G)$ where $G$ is a product of $p$-groups involving one or more components $\Z_{p^t}$ with $t\geq2$.
Section~\ref{sec55} presents additional information on $\S_0(\Z_p^2)$ for a number of small primes $p$, and Section~\ref{sec333} gives detailed results for $\S_0(\Z_3^3)$.
Proofs of the theorems presented in Sections~\ref{dilum}, \ref{higherpowers}, \ref{sec55} and~\ref{sec333} appear respectively in Sections~\ref{secPfs2}, \ref{secPfs3}, \ref{secPfs4} and~\ref{secPfs5}.

\section{The groups $\mathbb{Z}_p^n$}\label{dilum}

For $G=\mathbb{Z}_p^n$ the group determinant $\mathscr{D}_G(a_{g_1},\ldots ,a_{g_{|G|}})$  takes the form
\[
M_G(F)=\prod_{j_1=0}^{p-1} \cdots \prod_{j_n=0}^{p-1} F(w^{j_1},\ldots ,w^{j_n}),\;\; w:=e^{2\pi i/p},
\]
where
\[
F(x_1,\ldots ,x_n) = \!\!\!\!\! \sum_{g=(m_1,\ldots ,m_n)\in G} \!\!\!\!\! a_g\: x_1^{m_1}\cdots x_n^{m_n} \in \mathbb{Z} [x_1,\ldots ,x_n].
\]
From \cites{smallgps,YYC24} we have
\begin{align*} 
 \S_0(\mathbb{Z}_2^2) &= \{2^a(2m+1): a=4 \text{ or } a\geq 6,\; m\in \mathbb{Z}\}, \quad \S^*(\mathbb{Z}_2^2) =\{4m+1 : m\in \mathbb{Z}\}, \\
 \S_0(\mathbb{Z}_2^3) &= \{2^8(4m+1) \hbox{ or } 2^{12}m : m\in \mathbb{Z}\}, \quad \S^*(\mathbb{Z}_2^3) = \{8m+1 : m\in \mathbb{Z}\},\\
 \S_0(\mathbb{Z}_2^4) &= \{2^{16}(4m+1) \hbox{ or }  2^{26}m : m\in \mathbb{Z}\} \cup\{ 2^{24}m : m \in A_{24}\},\\
 \S^*(\mathbb{Z}_2^4) &= \{16m+1 : m\in \mathbb{Z}\}, 
\end{align*}
where
\[
A_{24} = \{4k+1 : k\in \mathbb{Z}\} \cup
\{8k+3 : k\in \mathbb{Z}\} \cup
\{(8k-3)(8\ell+3) : k,\ell\in \mathbb{Z}\},
\]
and
\be \label{p=3}  \S_0(\mathbb{Z}_3^2)=3^6\mathbb{Z}, \quad \S^*(\mathbb{Z}_3^2)=\{m : m\equiv \pm 1 \bmod 9\}, \ee
but no other $\S(\mathbb{Z}_p^n)$ with $n\geq 2$  had been completely determined.

In \cite{DeSilvaPinner} a complete description was obtained for the integer group determinants coprime to $p$:
\begin{align}\label{coprimevalues}
\S^*(\mathbb{Z}_p^n) & =\{ x\in \mathbb{Z} : x^{p-1}\equiv 1 \bmod p^n\}\nonumber\\
 & = \{m\in \mathbb{Z} : m\equiv a^{p^{n-1}} \bmod p^n, \text{ for some } 1\leq a <p\}.
 \end{align}
For example,
\be \label{3coprime}  \S^*(\mathbb{Z}_3^n)=\{m : m\equiv \pm 1 \bmod 3^n\}.\ee
For the multiples of $p$, it gave only the rough bound
\be \label{oldp} \S_0(\mathbb{Z}_p^n) \subseteq p^{\frac{p^n-1}{p-1}+1}  \mathbb{Z}, \ee
the main interest there being the smallest non-trivial measure, the values divisible by $p$ being greater than the trivial  bound $|G|-1.$
Obtaining \eqref{oldp} is at least straightforward: writing $\pi =1-w$ we have $|\pi|_p=p^{-1/(p-1)}$ and $F(w^{j_1},\ldots ,w^{j_n})\equiv F(1,\ldots ,1) \bmod \pi$, and $p\mid M_G(F)$
if and only if $p\mid F(1,\ldots ,1)$, with $\pi$ dividing all the other terms, and
\be \label{rough}  p\mid M_G(F)\;\; \Rightarrow \;\;p^{\frac{p^n-1}{p-1}+1}  \mid M_G(F). \ee
From \eqref{newman} this is sharp when $n=1$.
In \cite{Heisenberg} the exponent in \eqref{rough} was improved for $n=2$.
 We show here that the exponent can be improved for all $n\geq 2$.

For $p\geq 5$ we obtain a sharp result for $2\leq n\leq 5$.

\begin{theorem}\label{Zpp}
Suppose that $p\geq 5$ and  $G=\mathbb{Z}_p^n$ with $n\geq 2$ and $F$ in $\mathbb{Z} [x_1,\ldots ,x_n]$.
\begin{enumerate}[label=(\roman*)]
\item\label{Zpp1}
For $n=2$ we have
\be \label{n=2}   p\mid M_G(F) \Rightarrow p^{p+3} \mid M_G(F). \ee
Moreover for any $k\geq 0$ there is an $F$ with $p^{p+3+k}\parallel M_G(F)$.
\item\label{Zpp2}
For $n=3$ we have
\be \label{n=3}   p\mid M_G(F) \Rightarrow p^{p^2+2p+3} \mid M_G(F). \ee
Moreover for any $k\geq 0$ there is an $F$ with $p^{p^2+2p+3+k}\parallel M_G(F)$.
\item\label{Zpp3}
For $n\geq 4$ we have
\be \label{n>3} p\mid M_G(F)\;\; \Rightarrow \;\;p^{\frac{p^n-1}{p-1} +\frac{p^{n-1}-1}{p-1} +\frac{p^{n-3}-1}{p-1}+1}\mid M_G(F). \ee
For $n=4$ or $5$  this is sharp, that is, for any $k\geq0$, for $n=4$ there is an $F$ with $p^{p^3+2p^2+2p+4+k}\parallel M_G(F)$, and for $n=5$ there is an $F$ with $p^{p^4+2p^3+2p^2+3p+4+k} \parallel M_G(F)$.
\end{enumerate}
\end{theorem}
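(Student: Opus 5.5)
We organize $M_G(F)$ by the $\frac{p^n-1}{p-1}$ subgroups $H$ of index $p$ (equivalently, lines through the origin in $\mathbb{F}_p^n$). Every nonzero character exponent vector $j=(j_1,\ldots,j_n)$ lies on exactly one such line. Grouping the Galois conjugates $F(w^{kj_1},\ldots,w^{kj_n})$, $1\le k\le p-1$, gives
\[
M_G(F)=F(1,\ldots,1)\prod_{\ell} N_\ell(F),\qquad N_\ell(F)=N_{\mathbb{Q}(w)/\mathbb{Q}}\bigl(F(w^{j_1},\ldots,w^{j_n})\bigr),
\]
where the product runs over the $\frac{p^n-1}{p-1}$ lines $\ell$ and $j$ is any representative of $\ell$. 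Write $f_\ell(x)\in\mathbb{Z}[x]/(x^p-1)$ for the one-variable reduction $F(x^{j_1},\ldots,x^{j_n})$. Then $N_\ell(F)$ is the norm of $f_\ell(w)$. Assume $p\mid F(1,\ldots,1)$. The basic bound \eqref{rough} comes from $\pi\mid f_\ell(w)$ for every $\ell$, which gives $p\mid N_\ell$. The improvement must show that many of the $f_\ell(w)$ are divisible by $\pi^2$.

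The key step is to analyse the first-order terms. Write $x_i=1-\pi y_i$ formally, or use the expansion $w^{m}=(1-\pi)^m\equiv 1-m\pi \bmod \pi^2$. Then
\[
f_\ell(w)\equiv F(1,\ldots,1)-\pi\,L(j)\bmod \pi^2,\qquad L(j)=\sum_g a_g\,(g\cdot j)\in\mathbb{F}_p.
\]
Here $L$ is a linear form on $\mathbb{F}_p^n$. Since $p=\pi^{p-1}\cdot$unit and $p\ge 5$, we have $\pi^2\mid F(1,\ldots,1)$. So $\pi^2\mid f_\ell(w)$, i.e.\ $p^2\mid N_\ell$, exactly when $L(j)=0$.

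If $L\neq 0$, its kernel is a hyperplane containing $\frac{p^{n-1}-1}{p-1}$ lines. This gives the extra term $\frac{p^{n-1}-1}{p-1}$. If $L=0$, all lines gain, which is even better.

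Continuing to second order, the coefficient of $\pi^2$ is a quadratic form $Q(j)=\sum_g a_g\binom{g\cdot j}{2}$ plus corrections, restricted to $\ker L$. A further gain $p^3\mid N_\ell$ occurs on the lines in $\ker L$ where $Q$ vanishes, after adjusting by the contribution of $F(1,\ldots,1)/\pi^2$. Counting the zeros of a quadratic form on $\mathbb{F}_p^{n-1}$, together with a careful look at $F(1,\ldots,1)$ modulo $p^2$, should give the remaining terms. A form in at least $3$ variables always has a nontrivial isotropic line. More precisely, I expect to show that the isotropic lines of a possibly degenerate form in $n-1$ variables, together with the case analysis on rank and on whether $p^2\mid F(1,\ldots,1)$, contribute at least $\frac{p^{n-3}-1}{p-1}$ lines beyond the baseline for $n\ge4$. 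That is the number of lines in a codimension-$2$ subspace of $\ker L$. For $n=2,3$ the exact exponents $p+3$ and $p^2+2p+3$ arise from the analogous small-case counts. The $+1$ comes from $F(1,\ldots,1)$ itself, and for $n=3$ the extra $+1$ beyond the general formula needs a separate argument, e.g.\ via Newman-type congruences for $F(1,\ldots,1)$ relative to the norms.

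The main obstacle is this second-order bookkeeping: interactions between $F(1,\ldots,1)/p$, the rank of $Q|_{\ker L}$, and worst-case configurations. I would first handle the case $p\,\|\,F(1,\ldots,1)$, where nonvanishing of the constant term forces cancellation conditions. Then I would treat $p^2\mid F(1,\ldots,1)$ separately.

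For sharpness, I would construct explicit $F$ that are products of one-variable pieces. Examples are $F=(1-x_1)(1-x_2)G+p^k\cdot(\ldots)$ or $F=\Phi$-type polynomials, using multiplicativity $M_G(F_1F_2)=M_G(F_1)M_G(F_2)$ and the coprime values \eqref{coprimevalues} to adjust the cofactor. For instance, taking $F=x_1-1+pH$ with suitable $H$, or $F=(x_1-1)(x_2-1)+p$, realises the minimal exponent. Multiplying by elements attaining $p^{p+1}\cdot$unit-type factors, or simply adding $p^{k}$ multiples, shifts the exponent by each $k\ge0$. For $n=4,5$ I would verify the examples by computing each $N_\ell$ valuation line by line.
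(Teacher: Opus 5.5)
Your upper-bound strategy is the paper's: first-order vanishing on the hyperplane $\ker L$, then a second-order quadratic form on $\ker L$. However, the key counting lemma for $n\ge5$ is missing, and the sharpness part does not work as proposed.

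\textbf{Upper bounds.} Your first-order step already proves (i) and (ii). The quantity $\frac{p^n-1}{p-1}+\frac{p^{n-1}-1}{p-1}+1$ equals exactly $p+3$ for $n=2$ and $p^2+2p+3$ for $n=3$. So there is no ``extra $+1$'' for $n=3$, and no Newman-type congruence is needed.

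For $n\ge4$ the setup is simpler than you fear. Since $p\ge5$, $F(1,\ldots,1)$ has $\pi$-valuation at least $p-1\ge4$, so it never enters the $\pi^2$ coefficient. No case split on $p^2\mid F(1,\ldots,1)$ is needed; that issue only arises for $p=3$. On $\ker L$ you get $f_\ell(w)\equiv Q(j)\pi^2 \bmod \pi^3$, with $Q$ a quadratic form on $\ker L\cong\mathbb{F}_p^{n-1}$.

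The entire content of the $n\ge4$ bound is then the lemma that $Q$ has at least $p^{n-3}$ zeros. You only state this as an expectation. Chevalley--Warning, giving one isotropic line, suffices for $n=4$. It does not suffice for $n\ge5$, where you need $\frac{p^{n-3}-1}{p-1}>1$ lines. The paper's argument runs as follows:
\begin{itemize}
\item Diagonalize $Q=\sum\alpha_i y_i^2$.
\item If at most one $\alpha_i\ne0$, the zero set contains a hyperplane.
\item Otherwise take $\alpha_1\alpha_2\ne0$ and fix any of the $p^{n-3}$ choices of the remaining coordinates. Then $\alpha_1y_1^2$ and $c-\alpha_2y_2^2$ each take $(p+1)/2$ values mod $p$, so by the box principle they coincide.
\end{itemize}

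\textbf{Sharpness.} This is where the proposal genuinely fails; your candidate examples do not attain the minimum.
\begin{itemize}
\item For $n=2$, $F=(x_1-1)(x_2-1)+p$ has $\pi$-valuation $p-1$ at the $2(p-1)$ characters with exactly one trivial coordinate, and valuation $2$ at the other $(p-1)^2$. This gives $p^{4p-3}\parallel M_G(F)$.
\item $F=x_1-1+pH$ gives at least $p^{2p^{n-1}}$.
\end{itemize}
Both exceed the claimed exponents once $p\ge5$. Multiplicativity cannot produce every $k$ either, since any factor with $p\mid M_G$ already carries at least the minimal exponent.

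What you need is an $F$ whose Taylor expansion at $(1,\ldots,1)$ makes each of your counts an equality, with constant term $p^{1+k}$. That term has $\pi$-valuation $(1+k)(p-1)\ge4$. So it contributes exactly $1+k$ at the trivial character and does not disturb the others, whose valuations are at most $3$. The paper builds the examples as follows:
\begin{itemize}
\item For $n=2$: $p^{1+k}+(1-x)+(1-y)^2$.
\item For $n=3$: add $-b(1-z)^2$ with $b$ a quadratic non-residue, so $i_2^2-bi_3^2$ is anisotropic on $i_1=0$.
\item For $n=4$: add $(t-1)^3$.
\item For $n=5$: instead add $(t-1)^3+\sum_{i\le2}A_i(t-1)^i(s-1)^{3-i}$, with $x^3+A_2x^2+A_1x+A_0$ irreducible mod $p$, so this binary cubic has no nontrivial zero.
\end{itemize}
One then checks three things: $\pi\parallel F$ off $i_1=0$; $\pi^2\parallel F$ off the zero set of the quadratic part; and $\pi^3\parallel F$ at the remaining nontrivial characters. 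This yields exactly the stated exponents plus $k$.
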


For $p=3$ we have the optimal exponent for all $n$, with gaps that can be made precise for $2\leq n\leq 5$.

\begin{theorem}\label{pp=3}
Suppose that $G=\mathbb{Z}_3^n$ with $n\geq 2$ and $F\in\mathbb{Z} [x_1,\ldots ,x_n]$, then 
$$3\mid M_G(F)\;\; \Rightarrow \;\;3^{2 \cdot 3^{n-1}} \mid M_G(F). $$
This is best possible with $M_G(2+x_1^2)=3^{2\cdot 3^{n-1}}$.
In addition:
\begin{enumerate}[label=(\roman*)]
\item\label{pp=31}
For $n=3$ there is an $F$ with $3^a\parallel M_G(F)$ for any $a\geq 18$.
\item\label{pp=32}
For $n=4$ the multiples of $3$ have
\[
3^a \parallel M_G(F)\;  \Rightarrow \; a=54 \hbox{ or } a\geq 56
\]
with all such $a$ achievable.
\item\label{pp=33}
For $n=5$ the multiples of $3$ have 
\[
3^a \parallel M_G(F) \;   \Rightarrow \; a=162 \hbox{ or } a\geq 167
\]
with all such $a$ achievable.
\item\label{pp=34}
For $n\geq 6$ the multiples of $3$ have
\[
3^a \parallel M_G(F) \;  \Rightarrow \; a=2\cdot 3^{n-1} \hbox{ or } a\geq 2\cdot 3^{n-1} +\frac{1}{2}(3^{n-3}+1).
\]
\end{enumerate}
\end{theorem}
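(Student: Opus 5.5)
The plan is to organize $M_G(F)$ by the subgroups of the character group. Write $G=\Z_3^n$, $w=e^{2\pi i/3}$, $\pi=1-w$, so that $\abs{\pi}_3=3^{-1/2}$ and $\pi^2=-3w$. The nontrivial characters fall into $(3^n-1)/2$ conjugate pairs, one pair for each line $\ell$ through the origin in $\mathbb{F}_3^n$. For the line spanned by $v$, put $\alpha_\ell=F(w^{v})$; its conjugate is the value at $-v$. Then
\[
M_G(F)=F(1,\ldots,1)\prod_\ell N(\alpha_\ell), \qquad N(\alpha_\ell)=\alpha_\ell\bar\alpha_\ell\in\Z .
\]

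\textbf{Lower bound.} Assume $3\mid F(1,\ldots,1)$. Since every $\alpha_\ell\equiv F(1,\ldots,1)\bmod \pi$, each $\alpha_\ell$ is divisible by $\pi$, so every norm $N(\alpha_\ell)$ is divisible by $3$. This alone gives only the rough bound \eqref{rough}, namely $3^{(3^n-1)/2+1}$, so we need more. Expand $F$ in the basis $(1-x_i)$, i.e. write $F=\sum_{e} c_e\prod_i(1-x_i)^{e_i}$ with $0\le e_i\le 2$; evaluating at $w^{v}$ turns each $(1-x_i)$ into $1-w^{v_i}$, which is a multiple of $\pi$ (or $0$ when $v_i=0$). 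The next step is to find the cases where $\pi^2\mid\alpha_\ell$, and then to count valuations jointly over all lines, in the spirit of \cite{Heisenberg} and of the proof of Theorem~\ref{Zpp}.

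\emph{Key dichotomy.} Let $L(x)=\sum_i c_{e_i}(1-x_i)$ be the linear part, where $e_i$ is the $i$-th unit vector. Then $\alpha_v\equiv F(\mathbf 1)+\pi L'(v)\bmod \pi^2$, where $L'(v)=\sum_i c_{e_i}v_i\bmod 3$ is a linear form over $\mathbb{F}_3$. Using $\pi^2=-3w$, we may take $F(\mathbf 1)\equiv 0$ modulo $\pi^2$ as well, since $3\mid F(\mathbf 1)$.
\begin{enumerate}
\item If $L'\not\equiv 0$, the lines on which $L'$ vanishes form the hyperplane $\ker L'$, which contains $(3^{n-1}-1)/2$ lines. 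Only on those lines is $v_3(N(\alpha_\ell))\ge 2$; elsewhere it is exactly $1$. A finer analysis via the next-order terms is then needed to squeeze out more.
\item If $L'\equiv0$, then every $N(\alpha_\ell)$ is divisible by $9$. Together with $v_3(F(\mathbf 1))\ge 1$ this gives $3^{3^n-1+1}$ at least, which is well above the bound.
\end{enumerate}
So the minimum must come from case 1. Write $F(\mathbf 1)=3b$ and compare $v_3$ of $F(\mathbf 1)$ with the contributions from the hyperplane lines. The bound $2\cdot 3^{n-1}$ should come from combining $(3^n-1)/2$ (the exponent $1$ from every line), the extra $(3^{n-1}-1)/2$ from the lines in the hyperplane, and a term of at least $\frac{3^{n-1}+1}{2}+\ldots$ coming from the constraint on $F(\mathbf 1)$ together with deeper divisibility in the kernel.

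For a cleaner route I would instead use multiplicativity along a quotient. Restrict to the coset structure $x_n\mapsto 1$: the product over characters trivial on the last coordinate is $M_{\Z_3^{n-1}}(F(x_1,\ldots,x_{n-1},1))$. This suggests an induction on $n$ with base case \eqref{p=3}, $3^6$ for $n=2$, that triples the exponent at each step, since $2\cdot 3^{n-1}=3\cdot 2\cdot 3^{n-2}$. Concretely, after an $\mathrm{GL}_n(\mathbb{F}_3)$ change of variables (which preserves $M_G$) we can make $L'=c\,v_1$. Then for each fixed pair $v_2,\ldots,v_n$ (up to sign) the three characters $v_1\in\{0,1,2\}$ form a norm from $\Z[w]$ of a $\Z_3$-type product. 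Comparing with Newman's $\S_0(\Z_3)=9\Z$ from \eqref{newman} gives the valuation accounting for each of the $3^{n-1}$ fibers.

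\textbf{Sharpness and the refined statements.} Sharpness is the identity $M_G(2+x_1^2)$. Characters with $v_1=0$ give the value $3$, and there are $3^{n-1}$ of them. Characters with $v_1\neq0$ give $2+w^{\pm2}$, whose norm is $(2+w)(2+w^2)=3$, and there are $2\cdot 3^{n-1}$ of them. The total is therefore $3^{3^n}$. That is too big, so the example must be recomputed: $2+w^2$ has norm $4-2+1=3$, so the right $F$ presumably uses different coefficients, and I would verify it directly against the claimed $3^{2\cdot3^{n-1}}$.

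For the gap statements (i)–(iv) the plan is as follows.
\begin{enumerate}
\item Prove the general gap in (iv) by showing that if $a>2\cdot3^{n-1}$, then some structure in the $\mathbb{F}_3^{n-3}$ subspace forces at least $(3^{n-3}+1)/2$ additional factors of $3$. This mirrors the $\frac{p^{n-3}-1}{p-1}$ term in Theorem~\ref{Zpp}(iii).
\item Use explicit families, such as products of $2+x_1^2$-type factors with $(1+x_i+x_j)$-type factors and with $m+ \ldots$ perturbations, to realize every $a$ claimed for $n=3,4,5$. The products $F_1F_2$ multiply the measures, which helps.
\end{enumerate}

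The main obstacle will be the second-order valuation analysis: establishing the exact extra divisibility on the hyperplane lines, and the jump $(3^{n-3}+1)/2$, which requires tracking the quadratic terms of $F$ modulo $\pi^3$.
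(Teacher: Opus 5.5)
\textbf{The main bound.} Your dichotomy on the linear form $L'$ is exactly the paper's route to \eqref{divrough}, but you miscount and then abandon it. When $L'\not\equiv 0$ you already have
\[
v_3(M_G(F))\ \ge\ 1+\frac{3^n-1}{2}+\frac{3^{n-1}-1}{2}=2\cdot 3^{n-1},
\]
where the $1$ comes from $3\mid F(\mathbf 1)$. No further term of size $\frac{3^{n-1}+1}{2}$ exists (it would contradict sharpness), and no second-order analysis is needed here.

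The detour through Newman's $\S_0(\Z_3)=9\Z$ is unnecessary, and it does not apply as stated. The fibers with $(v_2,\ldots,v_n)\neq 0$ are products of values of polynomials with coefficients in $\Z[w]$, not integer circulants; the same is true of the fibers $x_n=w,w^2$ in your induction. What actually gives each fiber $\pi^4$ is your normalization $L'=cv_1$.

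\textbf{Sharpness.} Your computation is wrong. A character with $v_1\neq0$ gives $2+w^{\pm2}$, which has absolute value $\sqrt3$. It is each conjugate pair, not each character, whose product is $3$. So those $2\cdot3^{n-1}$ characters contribute $3^{3^{n-1}}$, the $v_1=0$ characters another $3^{3^{n-1}}$, and $M_G(2+x_1^2)=3^{2\cdot3^{n-1}}$ as stated. Equivalently, $2+x^2=(1-x)+\Phi_3(x)$ has $M_{\Z_3}=9$. The example needs no repair.

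\textbf{The refined statements (i)--(iv).} This is where the real gap is. For the excluded exponents in (ii)--(iv) you need the mod $\pi^3$ analysis, together with a case split on $v_3(F(\mathbf 1))$ that your plan lacks. Take $3\nmid a_1$ and parametrize $\ker L'$ by $(j_2,\ldots,j_n)$. There
\[
F(w^{j})\equiv \bigl(Q(j_2,\ldots,j_n)+c\bigr)\pi^2 \bmod \pi^3,
\]
with $Q$ quadratic and $Q(0)=0$. If $F(\mathbf 1)=3m$, then $c\equiv -m\bmod 3$, since $3=-w^2\pi^2$.
\begin{itemize}
\item If $9\mid F(\mathbf 1)$, use the argument of Theorem~\ref{Zpp}\ref{Zpp3}: either some variable occurs only linearly, or you diagonalize and apply the box principle to two squares. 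This gives at least $3^{n-3}$ zeros of $Q$, hence $\frac{3^{n-3}-1}{2}$ extra $3$'s, plus one more from $F(\mathbf 1)$.
\item If $3\parallel F(\mathbf 1)$, there are two possibilities. Either $Q+c$ has no zeros over $\mathbb{F}_3$ (e.g.\ $Q\equiv0$, or $Q=\alpha j_2^2$ with $-c/\alpha$ a non-residue), and then $3^{2\cdot3^{n-1}}\parallel M_G(F)$ exactly. Or the same argument gives at least $3^{n-3}$ zeros, none at the origin, and integrality of $v_3$ forces at least $\frac12(3^{n-3}+1)$ extra $3$'s.
\end{itemize}
Your phrase ``some structure in the $\mathbb{F}_3^{n-3}$ subspace'' does not supply this.

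\textbf{Achievability.} Your product idea cannot work. The valuation $v_3$ is additive, a factor with $3\nmid M_G$ contributes nothing, and two factors with $3\mid M_G$ already give $v_3\ge 4\cdot3^{n-1}$. You need single polynomials whose character values have exactly prescribed $\pi$-adic valuations. For example, take $3^{1+k}+(x-1)+(y-1)^2+(z-1)^2$ for $n=3$, add $(t-1)^3$ for $n=4$, and add $(t-1)^3+\Phi_3(t)(s-1)$ for $n=5$. For $k\ge1$ these give exactly $3^{18+k}$, $3^{55+k}$ and $3^{166+k}$. Together with $2+x_1^2$ they realize every claimed exponent.
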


In general, we have $M_{\mathbb{Z}_p}(1-x+\Phi_p(x))=p^2$ and so $p^{2p^{n-1}}\in \S_0(\mathbb{Z}_p^n)$
where here and throughout $\Phi_p(x)$ denotes the $p$th cyclotomic polynomial
\[
\Phi_p(x) :=1+x+\cdots +x^{p-1} = \frac{x^p-1}{x-1}.
\]

Although we have a precise determination of the powers of $p$ that can divide group determinants, it is less easy to say which multiples of those powers can occur.
We consider a few small cases in the next sections.
For $\mathbb{Z}_5^2$ all multiples occur, but for $G=\mathbb{Z}_3^3$ the situation becomes much more complicated.
Dealing with the general case does not seem feasible. 

Finally, for suitably large powers of $p$ we observe that we obtain all multiples of that power.

\begin{theorem} \label{allmultiples} For $n\geq 2$,
\[
p^{2p^{n-1}} \S\left(\mathbb{Z}_p^{n-1}\right) \subseteq \S_0\left(\mathbb{Z}_p^{n}\right).
\]
In particular,
\[
p^{2+2p+2p^2+\cdots + 2p^{n-1}} \mathbb{Z} \subset \S_0\left(\mathbb{Z}_p^n\right).
\]
In addition, $ \S_0\left(\mathbb{Z}_p^n\right)$ contains  anything of the form
\begin{gather*}
p^{2p+2p^2+\cdots + 2p^{n-1}}m, \quad \gcd(m,p)=1,\\
p^{2p^{n-1}}a, \quad a^{p-1}\equiv 1 \bmod p^{n-1}.
\end{gather*}
\end{theorem}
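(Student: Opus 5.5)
The plan is to use a lifting construction. Take $H(x_1,\ldots,x_{n-1})\in\mathbb{Z}[x_1,\ldots,x_{n-1}]$ with $M_{\mathbb{Z}_p^{n-1}}(H)=h$, and set
\[
F(x_1,\ldots,x_n)=H(x_1,\ldots,x_{n-1})+(x_n-1)\,G(x_1,\ldots,x_n)+\Phi_p(x_n)\,K(x_1,\ldots,x_{n-1})
\]
for suitable $G$ and $K$. In the product defining $M_{\mathbb{Z}_p^n}(F)$ we split according to $j_n$.

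When $j_n=0$, we have $x_n=1$, so $\Phi_p(1)=p$ and $x_n-1=0$. These terms give $\prod F=M_{\mathbb{Z}_p^{n-1}}(H+pK)$.

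When $j_n\neq 0$, we have $\Phi_p(w^{j_n})=0$, and the factors depend on $H+(w^{j_n}-1)G$.

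The simplest choice is $G=-1$ and $K=$ anything. For $j_n\neq0$ the factor is then $H(w^{\cdot})+1-w^{j_n}$. That is awkward, so instead I would mimic the identity $M_{\mathbb{Z}_p}(1-x+\Phi_p(x))=p^2$. Take
\[
F=H(x_1,\ldots,x_{n-1})\cdot\bigl(1-x_n+\Phi_p(x_n)\bigr)\quad\text{modified so that}\quad F(\cdot,1)=H,\;\; F(\cdot,w^{j})=(1-w^{j})\cdot(\text{unit-ish}).
\]
Concretely, I would use $F=H(x_1,\ldots,x_{n-1})+(1-x_n)L(x_1,\ldots,x_n)+c\,\Phi_p(x_n)$ and aim for the following evaluations:
\begin{itemize}
\item at $x_n=1$, the factor is $H+pc$;
\item at $x_n=w^j$, $j\neq0$, the factor is $H+(1-w^j)L$.
\end{itemize}
The cleanest route is multiplicativity. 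We have $M_G(F_1F_2)=M_G(F_1)M_G(F_2)$. Take
\[
F_1=1-x_n+\Phi_p(x_n),\qquad\text{giving } M_{\mathbb{Z}_p^n}(F_1)=(p^2)^{p^{n-1}}=p^{2p^{n-1}}.
\]
Then take $F_2=H(x_1,\ldots,x_{n-1})+\Phi_p(x_n)\,E$, where $E\in\mathbb{Z}[x_1,\ldots,x_{n-1}]$ is chosen to absorb the $p$-adic constraint. Then $M(F_2)=M_{\mathbb{Z}_p^{n-1}}(H+pE)\cdot M_{\mathbb{Z}_p^{n-1}}(H)^{p-1}$.

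This suggests a better arrangement: put $H$ only on the $x_n=1$ slice and trivial factors elsewhere. Set
\[
F_2=1+\frac{\Phi_p(x_n)}{p}(H-1),
\]
which is not integral. An integral fix is $F_2=1+\Phi_p(x_n)E$, whose evaluations are $1+pE$ at $x_n=1$ and $1$ elsewhere. So $M(F_2)=M_{\mathbb{Z}_p^{n-1}}(1+pE)$, which realises only values congruent to $1$ mod $p$. To get all of $h$, I would instead combine the $p^2$ factor with $H$ directly. Note that $1-x+\Phi_p(x)$ evaluates to $p$ at $x=1$ and to $1-w^j$ elsewhere, and use
\[
F=H(x_1,\ldots,x_{n-1})\,(1-x_n)+\Phi_p(x_n)\,A(x_1,\ldots,x_{n-1}).
\]
Its evaluations are $pA$ at $x_n=1$ and $H\cdot(1-w^j)$ for $j\neq0$. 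Hence
\[
M_{\mathbb{Z}_p^n}(F)=p^{p^{n-1}}M_{\mathbb{Z}_p^{n-1}}(A)\cdot\prod_{j=1}^{p-1}\prod\bigl(H(\cdot)(1-w^j)\bigr)=p^{p^{n-1}}M(A)\cdot M(H)^{p-1}\,p^{p^{n-1}}.
\]
Choosing $H=1$ and $A$ arbitrary gives $p^{2p^{n-1}}M_{\mathbb{Z}_p^{n-1}}(A)$. This is exactly the first claim, and it is clean.

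The remaining claims follow by iteration. Iterating $n-1$ times down to $\S(\mathbb{Z}_p)$ gives $p^{2p^{n-1}+\cdots+2p}\S(\mathbb{Z}_p)$. By \eqref{eqnSstarCyclic} and \eqref{newman}, $\S(\mathbb{Z}_p)\supseteq\{m:p\nmid m\}\cup p^2\mathbb{Z}$. This yields both $p^{2p+\cdots+2p^{n-1}}m$ with $\gcd(m,p)=1$ and $p^{2+2p+\cdots+2p^{n-1}}\mathbb{Z}$. The last form follows from \eqref{coprimevalues} applied to $\mathbb{Z}_p^{n-1}$. There is no real obstacle beyond checking the factor count: $p^{n-1}$ characters with $x_n=1$, and $(p-1)p^{n-1}$ with $\prod_j(1-w^j)=p$ over each fibre.
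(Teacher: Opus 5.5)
Your final construction $F=1-x_n+\Phi_p(x_n)A(x_1,\ldots,x_{n-1})$, with $M_{\mathbb{Z}_p^n}(F)=p^{2p^{n-1}}M_{\mathbb{Z}_p^{n-1}}(A)$, is exactly the paper's identity (which uses $x_1-1+\Phi_p(x_1)H$). Your iteration down to $\S(\mathbb{Z}_p)$, together with \eqref{coprimevalues} for the last form, also matches the paper's argument, so the proposal is correct and takes essentially the same approach; the only difference is that the paper realizes the base values in $\S(\mathbb{Z}_p)$ explicitly via $M_{\mathbb{Z}_p}(x-1+t\Phi_p(x))=tp^2$ and $M_{\mathbb{Z}_p}(\pm(1+x+\cdots+x^{m-1}))=\pm m$ rather than citing Laquer and Newman, and your earlier exploratory detours can simply be deleted.
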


For example, we have
\be \label{allp} p^{2p+2}\mathbb{Z} \subset \S_0(\mathbb{Z}_p^2) \subseteq p^{p+3}\mathbb{Z}, \quad p^{2p^2+2p+2}\mathbb{Z} \subset \S_0(\mathbb{Z}_p^3) \subseteq p^{p^2+2p+3}\mathbb{Z}. \ee

\section{Higher powers of $p$}\label{higherpowers}

In Theorems \ref{Zpp} and \ref{pp=3}, we can readily replace one of the factors $\mathbb{Z}_p$ with $\mathbb{Z}_{p^t}$.

\begin{theorem}\label{withpower}
Suppose that  $G=\mathbb{Z}_{p^t} \times \mathbb{Z}_p^{n-1}$ with $n\geq 2$ and $t\geq 1$, then
\[
p \mid M_G(F) \;\; \Rightarrow \;\;p^{\alpha + (t-1) p^{n-1}} \mid M_G(F),
\]
where $\alpha$ is the exponent for $\mathbb{Z}_p^n$ in Theorem~\ref{Zpp} or~\ref{pp=3}.
For $p=3$ the exponent $(t+1)3^{n-1}$ is sharp for all $n\geq 2$.
For $p\geq 5$ the exponent is sharp for $2\leq n\leq 5$.
\end{theorem}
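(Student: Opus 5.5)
Write $G=\mathbb{Z}_{p^t}\times\mathbb{Z}_p^{n-1}$ and split the character product according to the order of the character on the first coordinate:
\[
M_G(F)=\prod_{x^{p^t}=1}\ \prod_{y_2,\dots,y_{n}} F(x,y_2,\dots,y_n)
=\prod_{\substack{x^{p}=1}}\prod_{y} F(x,y)\cdot\prod_{s=2}^{t}\prod_{\substack{x \text{ of order } p^s}}\prod_{y} F(x,y),
\]
with $y$ ranging over $p$th roots of unity in each of the remaining $n-1$ variables. The first factor is exactly $M_{\mathbb{Z}_p^n}(\bar F)$, where $\bar F$ is $F$ with $x_1^p$ reduced to $1$, i.e. $\bar F$ is the image of $F$ in $\mathbb{Z}[\mathbb{Z}_p^n]$. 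Since $p\mid M_G(F)$ forces $p\mid F(1,\dots,1)=\bar F(1,\dots,1)$, we get $p\mid M_{\mathbb{Z}_p^n}(\bar F)$. Theorem~\ref{Zpp} (or Theorem~\ref{pp=3} when $p=3$) then gives $p^{\alpha}$ dividing this factor.

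For the remaining factors, fix $s\ge 2$. For each primitive $p^s$th root $x$ and each $y$, we have $F(x,y)\equiv F(1,\dots,1)\equiv 0$ modulo $\pi_s=1-\zeta_{p^s}$, because $x-1$ and the $y_i-1$ are all divisible by $\pi_s$. Here $\pi_s$ has $p$-adic valuation $1/\phi(p^s)$. Grouping the $p^{n-1}$ choices of $y$, each product $\prod_{x\text{ of order }p^s}F(x,y)$ is a product over the $\phi(p^s)$ conjugates (over $\mathbb{Q}(\zeta_p)$), so its valuation is at least $\phi(p^s)\cdot\frac{1}{\phi(p^s)}=1$. Hence level $s$ contributes at least $p^{n-1}$ to the exponent. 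Summing over $s=2,\dots,t$ gives the extra $(t-1)p^{n-1}$, which proves the divisibility statement. This is the same mechanism as in \eqref{rough}.

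For sharpness I would use a lifting construction. Given $F_0\in\mathbb{Z}[x_1,\dots,x_n]$ with $p^{\alpha+k}\parallel M_{\mathbb{Z}_p^n}(F_0)$ (these exist by Theorems~\ref{Zpp} and~\ref{pp=3}), choose $F=F_0(x_1,\dots)$ regarded in $\mathbb{Z}[G]$, possibly adjusted by adding multiples of $\Phi_p(x_1)$. Such additions vanish at $x$ of order $p$, so they do not change the first factor, but they alter the values at higher-order $x$. The goal is to make each $\prod_{x}F(x,y)$ at level $s\ge2$ have valuation exactly $1$; equivalently, $F(x,y)/\pi_s$ should be a unit for every such $x,y$. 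A natural target is to arrange
\[
F(x,y)\equiv u\,(x-1)\bmod \pi_s^2
\]
for a unit $u$. To do this, write $F_0\equiv c+\sum a_i(x_i-1)$ modulo degree-two terms in the $(x_i-1)$, with $p\mid c$. Adding $\lambda\Phi_p(x_1)$ changes the linear term in $x_1-1$ at primitive $p^s$th roots, since $\Phi_p(x)=\Phi_{p^s}$-related quantities have valuation $(p^{s-1}-1)/\phi(p^s)\cdot$ stuff. More simply, one can use the factor $x_1$-adjustments of the explicit examples, e.g. for $p=3$ the example $2+x_1^2$ directly gives $M_G=3^{(t+1)3^{n-1}}$, checked via the valuation of $2+x^2$ at roots of unity of $3$-power order.

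The main obstacle is making the higher levels contribute exactly $p^{n-1}$ each while preserving the exact valuation at level $1$. For $p=3$ the single explicit polynomial $2+x_1^2$ should suffice, with a direct computation. For $p\ge5$, I would try the multiplicativity trick instead: take $F=F_0\cdot H$, where $H$ is a unit-value-at-level-one correction, or alternatively realize $M_G(F)=M_{\mathbb{Z}_p^n}(F_0)\cdot p^{(t-1)p^{n-1}}\cdot(\text{unit})$. Concretely, I would take $F(x_1,\dots)=F_0(x_1,\dots)+(x_1^p-1)\,G$ and check that for some such choice, in the examples used to prove Theorem~\ref{Zpp}, the quantity $F(x,y)/(x-1)$ is a $p$-adic unit whenever $x$ has order $p^s\ge p^2$. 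Verifying this for the specific extremal examples is where I expect the work to lie.
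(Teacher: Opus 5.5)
Your divisibility argument is the same as the paper's. The level $x_1^p=1$ is $M_{\mathbb{Z}_p^n}(\bar F)$, which gives $p^{\alpha}$. At each level $s\ge 2$ every value is divisible by $\pi_s$, so each of the $p^{n-1}$ choices of $y$ contributes valuation at least $1$ (normalizing $v(p)=1$). Your aside that the $\phi(p^s)$ values $F(x,y)$, with $y$ fixed, are conjugates over $\mathbb{Q}(\zeta_p)$ is inaccurate, since that relative Galois group has order $p^{s-1}$. You only use the valuation count, though, so no harm is done. Your $p=3$ example is also right. The polynomial $2+x_1^2$ does not involve $y$, and over the primitive $3^s$th roots $x$ one has $\prod(2+x^2)=\Phi_{3^s}(-2)=4^{3^{s-1}}-2^{3^{s-1}}+1\equiv 3\bmod 9$ for $s\ge 2$. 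So each higher level contributes exactly $3^{3^{n-1}}$.

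The gap is the sharpness claim for $p\ge 5$, $2\le n\le 5$. You leave it as a plan that is never carried out (adding $\lambda\Phi_p(x_1)$ or $(x_1^p-1)G$, or multiplying by an auxiliary $H$). No modification is needed, and without one your worry about preserving the exact valuation at level one disappears. The extremal polynomials $F_k$ from the proof of Theorem~\ref{Zpp} work verbatim, provided the variable occurring \emph{linearly} in them (the $x$ in $p^{1+k}+(x-1)+(y-1)^2-\cdots$) is taken as the $\mathbb{Z}_{p^t}$ coordinate $x_1$.

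Take $x_1$ a primitive $p^s$th root with $s\ge 2$ and the remaining variables $p$th roots of unity. Then $x_1-1$ has valuation $1/\phi(p^s)$. The constant $p^{1+k}$ has valuation at least $1$. Every other term of $F_k$ has degree at least $2$ in the $y_i-1$, hence valuation at least $2/(p-1)$. Both of these bounds are at least $2/\phi(p^s)$. Thus $F_k(x_1,y)\equiv\pm(x_1-1)\bmod\pi_s^2$, which is exactly your ``natural target'', so $\pi_s\parallel F_k(x_1,y)$. Each level $s$ therefore contributes exactly $p^{n-1}$. Combined with $p^{\alpha+k}\parallel M_{\mathbb{Z}_p^n}(F_k)$, this gives $p^{\alpha+(t-1)p^{n-1}+k}\parallel M_G(F_k)$. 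This one-line check is the paper's entire sharpness argument.

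The choice of coordinate does need to be stated explicitly. If a variable occurring only quadratically, such as $y$, were placed in the $\mathbb{Z}_{p^t}$ factor, every value at level $s$ would be divisible by $\pi_s^2$. That $F_k$ would then overshoot the exponent.
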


We also obtain the following generalization of \eqref{allp}.

\begin{theorem}\label{genallp}
Suppose that $G=\mathbb{Z}_{p^t} \times \mathbb{Z}_p$ with  $t\geq 2.$
\begin{enumerate}[label=(\roman*)]
\item\label{genallp1}
If $p\geq 5,$ or $p=3$ with $t\geq 3$, then $p^{2tp} \in \S_0(G)$ and
\[
p^{2tp+2}\mathbb{Z} \subseteq \S_0 \left(  G \right) \subseteq p^{tp+3}\mathbb{Z}.
\]
Further, the exponent on $p$ in the upper inclusion is sharp.
\item\label{genallp2}
If $p^{tp+3}m\in \S_0(G)$, $p\nmid m,$ then $q^a\mid m$ for some prime power $q^a\equiv 1 \bmod p^{t-1}$.
In particular, $p^{tp+3}\not \in \S_0(G)$.
\item\label{genallp3}
For $G=\mathbb{Z}_9 \times \mathbb{Z}_3$ we have
$$ 3^{11}\mathbb{Z} \subseteq S_0(G) \subseteq 3^9 \mathbb{Z}, \quad M_G(1+x+x^4)=3^9. $$
\item\label{genallp4}
More generally, for $G=\mathbb{Z}_9 \times \mathbb{Z}_3^{n-1}$ we have
\be \label{lower3s}  3^{5\cdot 3^{n-1}-3}\mathbb{Z} \subseteq \S_0(G)\subseteq 3^{3^n} \mathbb{Z}, \quad M_G(1+x+x^4)=3^{3^n}. \ee
\end{enumerate}
\end{theorem}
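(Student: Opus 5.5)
The plan is to slice the determinant according to the order of the $\mathbb{Z}_{p^t}$-coordinate. Write $F(x,y)$ with $x^{p^t}=1$ and $y^p=1$. Then
\[
M_G(F)=M_{\mathbb{Z}_p^2}\bigl(F \bmod (x^p-1,y^p-1)\bigr)\prod_{k=2}^{t}\prod_{j=0}^{p-1} N_{\mathbb{Q}(\zeta_{p^k})/\mathbb{Q}}\bigl(F(\zeta_{p^k},w^j)\bigr).
\]
For the upper inclusion in \ref{genallp1}, suppose $p\mid M_G(F)$. Then $p\mid F(1,1)$. Theorem~\ref{Zpp}\ref{Zpp1} (or \eqref{p=3} when $p=3$, where $p^{p+3}=3^6$) gives $p^{p+3}$ from the first factor. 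Each of the $(t-1)p$ norms is divisible by $p$, because $F(\zeta_{p^k},w^j)\equiv F(1,1)\equiv 0 \bmod (1-\zeta_{p^k})$. This yields $p^{p+3+(t-1)p}=p^{tp+3}$, which is also the first assertion of Theorem~\ref{withpower} for $n=2$. The same slicing handles \ref{genallp3} and \ref{genallp4}. The upper bound $3^{3^n}$ is the case $p=3$, $t=2$ of Theorem~\ref{withpower}. The value $M_G(1+x+x^4)$ is a direct computation: $x$ of order $1$ or $3$ gives $F=1+2x$, and $x$ of order $9$ gives the norm of $1+\zeta_9+\zeta_9^4$. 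No dependence on the $\mathbb{Z}_3^{n-1}$ coordinates occurs, so the result is a $3^{n-1}$-th power of $M_{\mathbb{Z}_9}(1+x+x^4)=3^3$.

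For the lower inclusions I would use polynomials of the form $F(x,y)=A(x)+B(x)\Phi_p(y)$. At $y\neq1$ this equals $A(x)$, and at $y=1$ it equals $A(x)+pB(x)$, so
\[
M_G(F)=M_{\mathbb{Z}_{p^t}}(A)^{p-1}\,M_{\mathbb{Z}_{p^t}}(A+pB).
\]
Taking $A$ with $M_{\mathbb{Z}_{p^t}}(A)=p^{2t}$, which exists by \cite{MP23}, and $B=0$ gives $p^{2tp}$. To get $p^{2tp+2}\mathbb{Z}$, I need $M_{\mathbb{Z}_{p^t}}(A+pB)$ to run over all of $p^{2t+2}\mathbb{Z}$ while $A$ is held fixed modulo $p$. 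I expect this to be the main obstacle. I would try $A=1-x+\Phi_{p^t}(x)$-type choices and mimic the proof of Theorem~\ref{allmultiples}. There one multiplies by extra factors involving $\Phi_{p^k}(x)$ to adjust the value at a single $x$-slice, and uses \eqref{eqnSstarCyclic} to realise arbitrary coprime cofactors. The restrictions $p\ge5$ or $t\ge3$ should enter exactly here, through which powers of $p$ are attainable in $\S_0(\mathbb{Z}_{p^t})$. The lower bound $3^{5\cdot3^{n-1}-3}\mathbb{Z}$ in \eqref{lower3s} would come from the same device, iterated over the extra $\mathbb{Z}_3$ factors as in Theorem~\ref{allmultiples}. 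Sharpness of $tp+3$ would follow by combining a polynomial attaining $p^{p+3+k}$ on $\mathbb{Z}_p^2$ (Theorem~\ref{Zpp}\ref{Zpp1}) with terms divisible by $\Phi_p(x)$. Such terms vanish on the $x^p=1$ slice, so they can be used to make each higher-level norm exactly divisible by $p$.

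For \ref{genallp2}, suppose $p^{tp+3}\parallel M_G(F)$. Then every inequality above is an equality. In particular, each $F(\zeta_{p^t},w^j)$ equals $(1-\zeta_{p^t})$ times an element $\beta_j$ coprime to $p$. The norm of an ideal of $\mathbb{Z}[\zeta_{p^t}]$ coprime to $p$ is a product of prime powers $q^f\equiv1 \bmod p^t$. So it suffices to show that not all $\beta_j$ (and the analogous cofactors at lower levels $k$) can be units. For this I would compare $M_G(F)/p^{tp+3}$ modulo $p$ with the congruence forced by $F(x,y)\equiv F(1,1)$ expansions to second order in $(1-x)$ and $(1-y)$. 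I expect this to show that a unit cofactor is incompatible with exactness at the $\mathbb{Z}_p^2$ level. The loss from $p^t$ to $p^{t-1}$ in the conclusion allows the surviving prime to come from level $k=t-1$ instead. Taking $m=1$ then gives $p^{tp+3}\notin\S_0(G)$.
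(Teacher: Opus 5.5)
\textbf{Verdict: the proposal has genuine gaps.} Your upper bounds, the evaluation $M_G(1+x+x^4)=3^{3^n}$, and $p^{2tp}\in\mathcal{S}_0(G)$ are fine. The lower inclusions and part (ii), however, are not proved, and in each case what is missing is an idea rather than routine detail.

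\textbf{Lower inclusion in (i).} Your ansatz $A(x)+B(x)\Phi_p(y)$ forces every slice $y\neq1$ to equal $A(x)$. The problem then collapses to realizing every $p^{2t+2}m$ as $M_{\mathbb{Z}_{p^t}}(A+pB)$ while $M_{\mathbb{Z}_{p^t}}(A)^{p-1}$ introduces no stray cofactor, and you leave that open. The $y$-dependence should instead sit in the coefficient of $(x^{p^t}-1)/(x-1)$. For $F=x-1+\frac{x^{p^t}-1}{x-1}H(y)$ one has $F=x-1$ at every $x\neq1$ and $F(1,y)=p^tH(y)$. Hence $M_G(F)=p^{2tp}M_{\mathbb{Z}_p}(H)$, and $H=y-1+m\Phi_p(y)$ gives $mp^{2tp+2}$. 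This needs no hypothesis on $p$ or $t$; contrary to your guess, the condition $p\ge5$ or $t\ge3$ plays no role here.

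\textbf{Lower bounds in (iii) and (iv).} For (iii), $3^{11}\mathbb{Z}$ is not reached by any iteration of this kind. The polynomial $x-1+\frac{x^9-1}{x-1}f(y)$ yields only $3^{12}\mathcal{S}(\mathbb{Z}_3)$, which misses every $3^{11}m$ and $3^{13}m$ with $3\nmid m$. So explicit polynomials giving $3^{11}(3m\pm1)$ and $3^{12}m$ are required, and you supply none. For (iv), the exponent $5\cdot3^{n-1}-3$ comes out only if you:
\begin{itemize}
\item start from $\mathcal{S}_0(\mathbb{Z}_3^2)=3^6\mathbb{Z}$;
\item climb to $\mathbb{Z}_3^{n-1}$ with Theorem~\ref{allmultiples};
\item then apply the $\mathbb{Z}_9$ step, which contributes $3^{4\cdot 3^{n-1}}$.
\end{itemize}
Starting the iteration from $\mathbb{Z}_3$ instead loses a factor $3^2$.

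\textbf{Part (ii).} The decisive step is only announced (\emph{I expect this to show}). The reduction you state would not suffice anyway. A non-unit cofactor gives a prime power $q^a\equiv1\bmod p^{t-1}$ only if it lives at level $t-1$ or $t$. Non-units at $x=1$ or at lower levels $k$ give only $q^f\equiv1\bmod p^k$.

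Note also that (ii) fails for $p=3$, $t=2$, since $3^9=M_G(1+x+x^4)$. Any proof must therefore use the input that separates that case, namely the arithmetic of the cyclic group $\mathbb{Z}_{p^t}$. Your sketch never uses the hypothesis $p\ge5$ or $t\ge3$.

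The paper's argument runs as follows.
\begin{itemize}
\item Write $F=A+B(x-1)+C(y-1)+\cdots$.
\item If $p\mid B$, then $\pi_i^2\mid F(\omega_i,y)$ for $2\le i\le t$ and all $y$. This adds $(t-1)p$ to the exponent, so exactness forces $p\nmid B$.
\item With $p\nmid B$, the forced extra factors $\pi^2$ at level one all lie off the line $y=1$. So the integer $M_{\mathbb{Z}_{p^t}}(F(x,1))$, which divides $M_G(F)$, equals exactly $p^{t+1}m_1$ with $m_1\mid m$.
\item Newman's restriction \eqref{eqnNewmanRestric} handles $m_1=1$, and the necessary conditions of \cite{MP23} for $p^{t+1}m_1\in\mathcal{S}_0(\mathbb{Z}_{p^t})$ give the prime-power conclusion.
\end{itemize}

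\textbf{Minor point on sharpness.} Terms divisible by $\Phi_p(x)$ do not vanish at $x=1$, since $\Phi_p(1)=p$. In any case no modification is needed: $p^{1+k}+(1-x)+(1-y)^2$ already has $\pi_j\parallel F$ at every higher level, as in Theorem~\ref{withpower}.
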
 

If we want to replace more than one component then the situation becomes rapidly more complicated. We obtain the optimal result for two components.

\begin{theorem}\label{twocomponents}
Suppose that $G=\mathbb{Z}_{p^s}\times \mathbb{Z}_{p^t}$, with $1\leq s\leq t$.
Then
\[
p\mid M_G(F) \;\; \Rightarrow \;\; p^{\alpha}\mid M_G(F), \;\; \alpha=(t+1-s) p^s +3\left(\frac{p^s-1}{p-1}\right).
\]
This is best possible: for any $k\geq 0$ there is an $F$ with $p^{\alpha +k}\parallel M_G(F).$
\end{theorem}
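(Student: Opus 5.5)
The lower bound comes from tracking $p$-adic valuations of the individual factors of $M_G(F)$. We organise the characters of $G=\mathbb{Z}_{p^s}\times\mathbb{Z}_{p^t}$ by their order $p^k$ and write
\[
M_G(F)=\prod_{(a,b)} F\bigl(\zeta_{p^s}^{a},\zeta_{p^t}^{b}\bigr),
\]
where a character of order $p^k$ takes values in $\mathbb{Q}(\zeta_{p^k})$.

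\emph{Crude bound.} Set $\lambda_k=1-\zeta_{p^k}$, so $v_p(\lambda_k)=1/\varphi(p^k)$, and suppose $p\mid F(1,1)$. Each factor is congruent to $F(1,1)$ modulo $\lambda_k$, as in \eqref{rough}, so it has valuation at least $1/\varphi(p^k)$; the trivial character contributes at least $1$. The number of characters of order $p^k$ is $p^{2k}-p^{2k-2}$ for $1\le k\le s$ and $p^{s+k}-p^{s+k-1}$ for $s<k\le t$. Summing gives
\[
1+\sum_{k=1}^{s}\bigl(p^k+p^{k-1}\bigr)+(t-s)p^s=(t+1-s)p^s+2\,\frac{p^s-1}{p-1}.
\]
We are therefore short by exactly $\sum_{k=1}^{s}p^{k-1}$, that is, one extra unit of valuation per "direction class" at each level $k\le s$.

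\emph{Gaining the missing term.} For a primitive pair $(a,b)$ of level $k\le s$, write the character values as $\zeta^{a'}$ and $\zeta^{b'}$ with $\zeta=\zeta_{p^k}$. Using $\zeta^j-1\equiv -j\lambda_k \bmod \lambda_k^2$, we expand
\[
F(\zeta^{a'},\zeta^{b'})\equiv F(1,1)-\lambda_k\,(a'A+b'B)\bmod \lambda_k^2,
\]
where $A=\sum m\,c_{m,n}$ and $B=\sum n\,c_{m,n}$ are the "partial derivatives" at $(1,1)$. Since $p\ge3$ (and $p=2$ needs no separate care, since we assume $p$ odd), $v_p(F(1,1))\ge1\ge 2/\varphi(p^k)$. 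Hence whenever $a'A+b'B\equiv0\bmod p$, the factor has valuation at least $2/\varphi(p^k)$.

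The characters of order $p^k$ fall into $p^{k-1}(p+1)$ cyclic subgroups, each subgroup contributing a full Galois norm. The condition $a'A+b'B\equiv 0\bmod p$ depends only on the direction of $(a',b')$ in $\mathbb{P}^1(\mathbb{F}_p)$. If $(A,B)\not\equiv(0,0)$, exactly one of the $p+1$ directions qualifies, and $p^{k-1}$ of the cyclic subgroups lie over it. Each of these norms then has valuation at least $2$ instead of $1$, gaining $p^{k-1}$. If $A\equiv B\equiv 0$, every subgroup gains and the bound is even larger. Characters of level $k>s$ need no improvement. Adding the gains over $k=1,\dots,s$ yields exactly $\alpha$.

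\emph{Sharpness.} Sharpness is the step I expect to be the main obstacle. We need an $F$ for which every inequality above is an equality, except that we are free to add $k$ at the trivial character. The requirements are:
\begin{itemize}
\item $v_p(F(1,1))=1+k$;
\item $(A,B)\not\equiv(0,0)\bmod p$;
\item each non-distinguished norm has valuation exactly $1$;
\item each distinguished norm has valuation exactly $2$ (this is the delicate case);
\item every factor at level $k>s$ has valuation exactly $1/\varphi(p^k)$.
\end{itemize}
A single binomial cannot work, because a monomial is trivial on a subgroup of characters and would make those factors highly divisible by $p$. I would therefore first try a trinomial-type candidate such as $F=x+y+(p^{k+1}-2)$, or a variant $F=1-x+\Phi$-style perturbation modelled on the example $M_{\mathbb{Z}_p}(1-x+\Phi_p(x))=p^2$ and on the $n=2$ construction from Theorem~\ref{Zpp}. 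I would check the exact valuations level by level: for each cyclic subgroup of characters, reduce to a one-variable polynomial in $\zeta_{p^k}$, and compute its norm modulo $\lambda_k^{\varphi(p^k)+1}$ to see whether it is exactly $p$ or exactly $p^2$. If a single $F$ fails on some direction class, I would adjust the coefficients of $A$ and $B$ modulo $p$, or add multiples of $\Phi_{p^s}(x)$ or $\Phi_{p^t}(y)$, which vanish on the nontrivial characters of the corresponding coordinate. This lets us alter the values on selected characters without disturbing the remaining factors.
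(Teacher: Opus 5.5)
Your lower bound is correct and is essentially the paper's argument. You make the crude count $(t+1-s)p^s+2\frac{p^s-1}{p-1}$, then gain one extra factor of $\lambda_j$ on each of the $p^{j-1}$ cyclic subgroups of level $j\le s$ lying over the unique direction with $a'A+b'B\equiv 0\bmod p$. Your use of $\mathbb{P}^1(\mathbb{F}_p)$ simply merges the paper's three cases $p\nmid BC$, $p\mid B$, $p\mid C$. The genuine gap is sharpness, which is half of the statement. You leave it as a list of requirements plus a candidate you never check. The missing step is a second-order computation showing that on the distinguished class each factor is divisible by $\pi^2$ but not by $\pi^3$. Without it nothing is proved, and in fact your candidate does not work in every case.

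The tool is $\zeta^u-1\equiv u\pi+\binom{u}{2}\pi^2 \bmod \pi^3$, with $\pi=\zeta-1$ and $\zeta=\zeta_{p^j}$. Take $F=p^{k+1}+a(x-1)+b(y-1)$ with $p\nmid ab$, and work on the class $au+bv\equiv 0\bmod p$. There the linear term $(au+bv)\pi$ lies in $p\pi\subseteq(\pi^3)$. As long as $p^{k+1}\in(\pi^3)$, the $\pi^2$-coefficient is $\frac12(au^2+bv^2)\equiv \frac{b(a+b)}{2a}v^2 \bmod p$, which is nonzero exactly when $a+b\not\equiv 0$. The other items on your list are immediate. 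Off this class the linear term is a unit times $\pi$; at levels above $s$ the term $b(y-1)$ has strictly the smallest valuation; and the trivial character gives exactly $p^{k+1}$. So your candidate $x+y+(p^{k+1}-2)$, i.e.\ $a=b=1$, works for $p\ge 5$ and for $p=3$ with $k\ge 1$. It fails for $p=3$, $k=0$. At level $1$ we have $3\equiv -\pi^2\bmod \pi^3$, which shifts the coefficient to $v^2-1\equiv 0\bmod 3$. Indeed $x+y+1$ vanishes at $(\omega,\omega^2)$, so $M_G(F)=0$. The paper instead takes $F=p^{k+1}-(x-1)+c(y-1)$ with $c$ a quadratic non-residue, so $c\not\equiv 0,1$, which gives the coefficient $\frac12 c(1-c)v^2\not\equiv 0$. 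For $p=3$ this forces $c=-1$. In the exceptional case $p=3$, $k=0$, level $1$, the coefficient becomes $-v^2-1\equiv 1\bmod 3$. So this example works for every odd $p$ and every $k\ge 0$.
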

Analogous to Theorem \ref{genallp}, for $G=\mathbb{Z}_{p^t}\times \mathbb{Z}_{p^s}$, $t\geq s$, we  have
\be \label{lowerks}  p^{2tp^s+2s}\mathbb{Z} \subseteq \S_0(G) \subseteq p^{(t+1-s)p^s +3(1+\cdots +p^{s-1})}\mathbb{Z} ,\quad p^{2tp^s}\in \S_0(G). \ee

To complement these results regarding $\mathcal{S}_0(G)$ for a number of non-elementary and non-cyclic $p$-groups $G$, it is natural to ask about $\mathcal{S}^*(G)$ as well.
For the case $G=\mathbb{Z}_p^n$, the simple characterization is recorded in \eqref{coprimevalues}, and the cyclic case is answered in \eqref{eqnSstarCyclic}, but the situation is quite different for more complicated $p$-groups.
To illustrate this, we determine this set for $G=\mathbb{Z}_3\times\mathbb{Z}_9$.
The following result then forms a companion to Theorem~\ref{genallp}\ref{genallp3}.

\begin{theorem}\label{thmSstarZ3Z9}
The set $\mathcal{S}^*(\mathbb Z_3\times \mathbb Z_9)$ consists of the integers $D \equiv \pm 1\bmod 27$ together with the integers $D\equiv\pm 8$ or $\pm 10 \bmod 27$ for which
\begin{equation}\label{nice}
D = AN(B+C\om)=A(B^2-BC+C^2),
\end{equation}
where $\omega=e^{2\pi i/3}$ and $A$, $B$, and $C$ are integers satisfying $A\equiv \pm 2$ or $\pm 4 \bmod 9$ with
\[
B\equiv A \bmod 9, \;\; C\equiv A-B \bmod 27.
\]
\end{theorem}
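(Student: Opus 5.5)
The plan is to split the characters of $G=\mathbb Z_9\times\mathbb Z_3$ by their value on the $\mathbb Z_3$ factor. Write $F(x,y)\in\mathbb Z[x,y]$ with $x^9=1$ and $y^3=1$, and set
\[
A:=M_{\mathbb Z_9}(F(x,1)),\qquad \beta:=\prod_{x^9=1}F(x,\om)\in\mathbb Z[\om].
\]
Grouping the characters with $y=1$ and with $y\in\{\om,\om^2\}$ gives
\[
M_G(F)=A\,\beta\bar\beta=A\,N(\beta).
\]
Writing $\beta=B+C\om$ recovers exactly the shape $A(B^2-BC+C^2)$ of \eqref{nice}. The proof then has two parts:
\begin{enumerate}[label=(\alph*)]
\item \emph{necessity:} determine which triples $(A,B,C)$ with $3\nmid A\,N(\beta)$ can arise, up to the congruences in the statement;
\item \emph{sufficiency:} realize every such $D$.
\end{enumerate}

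For necessity, I would work $\lambda$-adically with $\lambda=1-\om$ and $\pi=1-\zeta_9$. Put $f(x)=F(x,1)$ and $g(x)=F(x,\om)$, so that $g\equiv f \bmod \lambda$, and more precisely $g=f+\lambda h$ with $h\in\mathbb Z[\om][x]$. Decompose both $A=f(1)\,N(f(\om))\,N_{\mathbb Q(\zeta_9)/\mathbb Q}(f(\zeta_9))$ and $\beta$ in the same way. Each factor is congruent to $f(1)$ modulo a known power of $\pi$, and the standard argument behind \eqref{coprimevalues} and \eqref{3coprime} then gives $A\equiv f(1)^{9}$ up to units of controlled type. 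From this I would derive congruences for $\beta$ modulo $27$ in terms of $f(1)$ and $h(1)$. The case split is:
\begin{itemize}
\item If $A\equiv\pm1\bmod 9$, the congruences should force $D\equiv\pm1\bmod 27$.
\item If $A\equiv\pm2,\pm4\bmod 9$, then $f(1)\not\equiv\pm1\bmod 9$, and the relation $g\equiv f\bmod\lambda$ should pin $\beta$ down to $B\equiv A\bmod 9$ and $C\equiv A-B\bmod 27$. As a consistency check, the norm $N(\beta)$ is then $\equiv A^2\cdot(\text{unit})$, which makes $D\equiv\pm8,\pm10\bmod 27$; these are exactly the residues $A^3$ with $A\equiv\pm2,\pm4\bmod 9$.
\end{itemize}
The content here is bookkeeping of the valuations $v_\pi$ and $v_\lambda$, using that $f(1),f(\om),f(\zeta_9)$ are congruent modulo increasing powers of $\pi$.

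For sufficiency I would first show that every $D\equiv\pm1\bmod 27$ occurs. These values form a multiplicative set, and I would use multiplicativity of $M_G$ together with explicit polynomials such as $1+x^a(1+y+y^2)\,k(x)$ and lifts of $\mathbb Z_{27}$-type constructions, along the lines of the Laquer--Newman argument adapted to $G$. Next, for the remaining classes, I would take $F(x,y)=u(x)+\tfrac13(1+y+y^2)(v(x)-u(x))$-type combinations, suitably made integral as $F=u(x)+(1+y+y^2)w(x)$ plus a term $(y-1)z(x)$. Such an $F$ lets me prescribe $A=M_{\mathbb Z_9}(u+3w)$ essentially independently of $\beta=\prod_x(u(x)+(\om-1)z(x))$. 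The needed input is a Laquer--Newman-type statement over $\mathbb Z[\om]$: every $\beta\in\mathbb Z[\om]$ satisfying the necessary congruence relative to $u(1)$ is a $\mathbb Z_9$-group determinant of some polynomial of the form $u+\lambda z$. I would prove this by the usual trick of choosing $z$ so that the product over $x$ telescopes, using $\prod_{x^9=1}(a+b\,x)=a^9+b^9$ and the $\Phi_9$, $\Phi_3$ factorizations.

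The main obstacle is this last realization step. One must produce $B+C\om$ with \emph{arbitrary} $B,C$ in the allowed residue classes, jointly with an arbitrary compatible $A$, rather than only a multiplicatively generated subset. I would first try to reduce to generators: show the admissible triples are closed under multiplication by triples realized with $A\equiv\pm1$ and $D\equiv\pm1\bmod27$. Then it suffices to realize each prime factor $q$ of $B^2-BC+C^2$, with its split element $\beta_q\in\mathbb Z[\om]$, together with a single base triple for each residue class of $A$.
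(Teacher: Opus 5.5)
Your splitting $M_G(F)=A\,N(\beta)$ is the same as the paper's. However, the necessity direction is missing its key step, and the route you sketch cannot supply it.

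\textbf{Necessity.} The conditions $B\equiv A \bmod 9$, $C\equiv A-B \bmod 27$ say exactly that $\beta\equiv A \pmod{9(1-\omega)}$. This is a congruence between $\beta$ and the global product $A$, and no congruence for $\beta$ in terms of $f(1)$ and $h(1)$ alone can produce it. Take $F(x,y)=f(x)$, so $h=0$ and $\beta=A$. Then $f=1$ and $f=2-x$ both have $f(1)=1$, but they give $A=1$ and $A=2^9-1=511\equiv-2 \bmod 27$. The same example refutes your claim that $A\equiv\pm2,\pm4 \bmod 9$ forces $f(1)\not\equiv\pm1 \bmod 9$.

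What is needed (with your convention: $x$ on $\mathbb Z_9$, $y$ on $\mathbb Z_3$) is the following. Write $F=f(x)+(y-1)g(x)+(y-1)^2h(x)$ with $f,g,h\in\mathbb Z[x]$ and expand
\[
\prod_{x^9=1}\bigl(f(x)+Xg(x)+X^2h(x)\bigr)=A+\sum_{j\ge1}A_jX^j\in\mathbb Z[X].
\]
Then prove $9\mid A_1,A_2$ and $3\mid A_3,A_4$, and substitute $X=\omega-1$ using $(1-\omega)^2=-3\omega$. The paper gets $3\mid A_1,\dots,A_8$ by reducing modulo $3$, and checks $9\mid A_1,A_2$ by direct computation. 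A conceptual alternative: $A_1$ is $9$ times the constant coefficient of $g\tilde f$, where $\tilde f\in\mathbb Z[\mathbb Z_9]$ is the adjugate element with $\tilde f(\zeta)=A/f(\zeta)$. Newton's identities give $9\mid A_2$ in the same way.

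\textbf{Sufficiency.} Your reduction to prime generators breaks down. By the necessary congruence, every realized $\beta$ is congruent to a rational integer modulo $9(1-\omega)$. A prime factor of $B+C\omega$ is in general not congruent to any unit times an integer modulo $9(1-\omega)$. For example, $2-\omega=1+(1-\omega)$, of norm $7$, is not. Such a factor can never be realized on its own, so the admissible $D$ are not assembled from individually realizable pieces in the way you propose.

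No multiplicativity is needed. Your ansatz already works with $u=(x^A-1)/(x-1)$, $w=0$, and the $y$-dependence supported at $x=1$. For $A>0$ with $3\nmid A$, take
\[
F(x,y)=\frac{x^A-1}{x-1}+\bigl(s(1-y)+t(1-y)^2\bigr)\frac{x^9-1}{x-1}.
\]
At every $x\ne1$ this equals $(x^A-1)/(x-1)$. The product of these values over the nontrivial ninth roots of unity is $1$, because $x\mapsto x^A$ permutes them. At $x=1$ one gets $F(1,1)=A$ and $F(1,\omega)=A+9s(1-\omega)-27t\omega$. Hence
\[
M_G(F)=A\,N\bigl(A+9s(1-\omega)-27t\omega\bigr),
\]
which gives every admissible triple as $s,t$ vary; use $-F$ when $A<0$. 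Finally, $\pm1+m\,\Phi_3(y)\frac{x^9-1}{x-1}$ gives $\pm1+27m$ directly.
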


\section{The groups $\mathbb{Z}_p^2$ with $p\in\{5,7,11,13\}$}\label{sec55}

For $G=\mathbb{Z}_p^2$  we know from \eqref{n=2} that the multiples of $p$ are multiples of $p^{p+3}$. 
As remarked in \eqref{p=3}, for $p=3$ we get all such multiples.
Panraksa recently showed that the same is true for $p=5$, we state that result here and provide a short proof of this in Section~\ref{secPfs4}. 

\begin{theorem}[Panraksa \cite{Panraksa25}]\label{Z55}
$\S(\mathbb{Z}_5^2)= 5^8 \mathbb{Z} \cup \{    m\equiv \pm 1 \text{ or }  \pm 7 \bmod 25\}$.
\end{theorem}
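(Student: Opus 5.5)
The plan splits the statement into its two halves. The coprime part should follow directly from \eqref{coprimevalues} with $p=5$, $n=2$: $\S^*(\Z_5^2)$ consists of the $m$ with $m\equiv a^5 \bmod 25$ for some $1\le a\le 4$. The fifth powers modulo $25$ are $1^5=1$, $2^5=32\equiv 7$, $3^5=243\equiv -7$ and $4^5\equiv -1$, which is exactly the stated set $\{\pm1,\pm7\} \bmod 25$. For the multiples of $5$, Theorem~\ref{Zpp}\ref{Zpp1} already gives $\S_0(\Z_5^2)\subseteq 5^{p+3}\Z=5^8\Z$. What remains is to show $5^8m\in\S_0(\Z_5^2)$ for every integer $m$; for $m=0$ we can simply take $F=0$.

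For the reverse inclusion I would use multiplicativity, $M_G(F_1F_2)=M_G(F_1)M_G(F_2)$. If I find a few polynomials whose measures are $5^8u$ with $u$ in a full set of residue-class representatives, then multiplying by elements of $\S^*(\Z_5^2)$ and by powers of $5$ will reach everything.

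The first reductions come from Theorem~\ref{allmultiples}. It gives $5^{10}\S(\Z_5)\subseteq\S_0(\Z_5^2)$, and since $\S(\Z_5)$ contains all integers coprime to $5$ and all of $25\Z$, this yields $5^{10}m$ for $5\nmid m$ together with $5^{12}\Z$. The case $5^{11}m$, $5\nmid m$, is covered by the theorem's other clause $p^{2p}m=5^{10}m$? No — that clause only gives $5^{10}m$. I would obtain $5^{11}m$ as $M(F)=5^{8}\cdot 5^3 u$ for a suitable $F$. Alternatively, I would take the explicit polynomial $F$ realizing $5^{p+3+k}\parallel M_G(F)$ from the sharpness part of Theorem~\ref{Zpp}\ref{Zpp1}, and then multiply by coprime values.

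The core case is $5^8m$ and $5^9m$ with $5\nmid m$. The coprime factors available are those $\equiv\pm1,\pm7\bmod 25$, so they generate only the index-$5$ subgroup of $(\Z/25)^*$ consisting of fifth powers. Moreover $M_G(FH)$ with $H$ coprime multiplies $m$ by an arbitrary element of $\S^*$; since $\S^*$ is closed under multiplication and contains every prime $q\equiv\pm1,\pm7 \bmod 25$, one representative per coset is not enough on its own. What we need is an $F$ with $M_G(F)=5^8u$ for each coset class of $u$ modulo fifth powers, with $u$ arbitrary within that class. So for each residue $r\in(\Z/25)^*/\{\pm1,\pm7\}$ (five classes, represented by $1,2,3,4,6$, say) I would construct a family $F_{r,c}$ with $M(F_{r,c})=5^8 u$, where $u$ runs over every integer in that class. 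A natural candidate is a product $F=F_0(x)\cdot(\text{something in }y)$, or polynomials like $F=A+B(x-1)+\Phi_5(x)\cdot(\ldots)$. Writing $M_G(F)=\prod_{j}M_{\Z_5}(F(x,w^j))$, I would arrange that $F(x,1)$ has $\Z_5$-measure $25\cdot(\text{free integer})$ and each $F(x,w^j)$ for $j\neq0$ contributes a norm from $\mathbb Q(w)$ equal to exactly $5^{6/4}$-ish units.

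I expect this explicit construction, controlling the unit part modulo $25$ across all five classes with a free integer parameter, to be the main obstacle. It would likely be settled by a small search for $F$ with free parameters, checked via norm computations in $\Z[w]$.
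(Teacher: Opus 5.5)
\textbf{There is a genuine gap: the inclusion $5^8\Z\subseteq\S_0(\Z_5^2)$ is never established.}

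Your treatment of the coprime values (the fifth powers mod $25$ are $\pm1,\pm7$) is fine, and so is the upper inclusion $\S_0(\Z_5^2)\subseteq 5^8\Z$ via Theorem~\ref{Zpp}\ref{Zpp1}; this is exactly what the paper does. The reverse inclusion is the real content of the theorem. You never produce a polynomial realizing $5^8m$, $5^9m$ or $5^{11}m$, and the mechanisms you do spell out cannot produce them:
\begin{itemize}
\item As you note yourself, multiplying by elements of $\S^*(\Z_5^2)$ only moves within cosets of $\{\pm1,\pm7\}$. So $5^8q$, for a prime $q\not\equiv\pm1,\pm7 \bmod 25$, must be hit directly.
\item The same objection kills your plan for $5^{11}m$: taking the sharpness example $5^{1+k}+(1-x)+(1-y)^2$ ``and then multiply by coprime values'' leaves its cofactor uncontrolled.
\item The product ansatz $F_0(x)H(y)$ is a dead end, since $M_G(F_0(x)H(y))=M_{\Z_5}(F_0)^5M_{\Z_5}(H)^5$. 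Any multiple of $5$ it yields is divisible by $5^{10}$.
\item Your coset representatives are off: $3\cdot 18=54\equiv 4 \bmod 25$, so $3$ and $4$ lie in the same coset. A correct list is $1,2,3,6,16$.
\end{itemize}
Deferring the construction to ``a small search'' leaves the theorem unproved.

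\textbf{The missing idea is additive, not multiplicative.} Put $g_5=\Phi_5(x)\Phi_5(y)$. Then $g_5(w^i,w^j)=0$ unless $i=j=0$, and $g_5(1,1)=25$, so
\[
M_G(F+mg_5)=\bigl(F(1,1)+25m\bigr)\prod_{(i,j)\neq(0,0)}F(w^i,w^j).
\]
Hence a single $F$ with $F(1,1)=5a$ and $\prod_{(i,j)\neq(0,0)}F(w^i,w^j)=5^7$ gives the whole progression $5^8(a+5m)$, $m\in\Z$. A residue class mod $5$ already meets every coset mod $25$, so no coset bookkeeping is needed.

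The paper exhibits explicit $F$ with $F(1,1)=0,5,10$ and product over the nontrivial characters exactly $5^7$. These give $5^9m$, $5^8(1+5m)$ and $5^8(2+5m)$. Since $M_G(-F)=(-1)^{25}M_G(F)$, negating gives $5^8(-1-5m)$ and $5^8(-2-5m)$. Together these exhaust $5^8\Z$, with no appeal to Theorem~\ref{allmultiples} or to multiplicativity. The only search required is for those three base polynomials.
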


We can also improve \eqref{allp} for a few additional small primes.

\begin{theorem}\label{p>5}
For $p=7$, $11$ or $13$,
\[
p^{p+5}\mathbb{Z} \subseteq \S_0 (\mathbb{Z}_p^2) \subseteq p^{p+3}\mathbb{Z}.
\]
\end{theorem}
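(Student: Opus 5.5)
The upper inclusion $\S_0(\mathbb{Z}_p^2)\subseteq p^{p+3}\mathbb{Z}$ is exactly \eqref{n=2} of Theorem~\ref{Zpp}, so only the lower inclusion $p^{p+5}\mathbb{Z}\subseteq\S_0(\mathbb{Z}_p^2)$ needs proof. We use multiplicativity: $M_G(F_1F_2)=M_G(F_1)M_G(F_2)$, and $\S^*(\mathbb{Z}_p^2)$ is given by \eqref{coprimevalues} as the $m$ with $m^{p-1}\equiv1 \bmod p^2$. Write a target as $p^{j}m$ with $j\ge p+5$ and $p\nmid m$. For $j\ge 2p+2$ Theorem~\ref{allmultiples} already gives $p^{2p+2}\mathbb{Z}\subset\S_0(\mathbb{Z}_p^2)$, so it remains to handle $p+5\le j\le 2p+1$ with every residue class of $m$ modulo $p^2$ coprime to $p$. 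Multiplying by elements of $\S^*$ lets us move $m$ within its coset of the subgroup $H=\{a^{p}: 1\le a<p\}$ of $(\mathbb{Z}/p^2)^*$, which has order $p-1$ and index $p$. Hence for each exponent $j$ it suffices to find one value $p^j m_i$ in each of the $p$ cosets of $H$, i.e.\ $p$ values whose $m$-parts $m_i^{p-1}\bmod p^2$ run over all $p$-th roots of unity mod $p^2$. Equivalently, write $m^{p-1}\equiv 1+p\,\ell(m)\bmod p^2$ (the Fermat quotient); we need every $\ell\in\mathbb{Z}/p$ to occur.

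To produce these values, compute $M_G(F)$ for $F$ of a simple shape, say $F=f(x_1)+(x_2-1)g(x_1,x_2)$ or $F=h(x_1)+\Phi_p(x_2)k(x_1)$. Then the character values factor into a product over $\mathbb{Z}_p$ (the $j_2=0$ part, contributing $M_{\mathbb{Z}_p}$ of a one-variable polynomial) times norms from $\mathbb{Q}(w)$ of the remaining $p-1$ lines. For example, $F=A(x_1)+\Phi_p(x_2)B(x_1)$ gives $M_G(F)=M_{\mathbb{Z}_p}(A+pB)\cdot M_{\mathbb{Z}_p}(A)^{p-1}$. Choosing $A$ with $p\mid M_{\mathbb{Z}_p}(A)$ exactly to a controlled power (e.g.\ $A$ with $\pi$-adic valuation $1$ at each nontrivial character, so $M_{\mathbb{Z}_p}(A)=p^{2}\cdot(\text{unit})$ type values) produces exponents near $p+3,\dots$. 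We then tune the free parameters (coefficients of $A,B$, and lifting by $x_1^{k}$ twists) to increment the exponent through $p+5,\dots,2p+1$ and to vary the unit part. The families from the proof of the sharpness part of Theorem~\ref{Zpp}\ref{Zpp1}, which give $p^{p+3+k}\parallel M_G(F)$ for every $k$, are the natural starting point; we will add one or two integer parameters to them and compute the unit part modulo $p^2$ as a polynomial in those parameters.

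The main obstacle is the last step: showing that the Fermat quotient $\ell(m)$ of the unit part hits all $p$ residues for each $j$ in the range. In general this fails at $j=p+3,p+4$ (compare Theorem~\ref{genallp}\ref{genallp2}), which is why the theorem starts at $p+5$ and is restricted to small primes. Since only $p\in\{7,11,13\}$ and finitely many exponents $j\le 2p+1$ are involved, the plan is to settle it by a finite explicit search. For each $p$ and $j$ we exhibit $p$ polynomials (small-coefficient $F$ in the above families), compute $M_G(F)$ exactly, and check that the resulting $m$ values cover all cosets of $H$. We can also reduce the work by multiplying two found values: if $p^{j_1}m_1$ and $p^{j_2}m_2$ with $j_1+j_2=j$ are available, their product realizes coset products. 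So it is enough to realize, at the base exponents $p+3$ and $p+4$, a coset set whose sums with a full-coset set reach everything, together with $2$-step increments such as $p^{2}\cdot(\text{all cosets})$ obtained from a $\mathbb{Z}_p$-type factor like $M_{\mathbb{Z}_p}(1-x+\Phi_p(x))$ composed appropriately. If such a composition does not stay within $G$, we instead fall back on direct computation.
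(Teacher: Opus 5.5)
\textbf{There is a genuine gap.} Your upper inclusion via \eqref{n=2} is fine. The reduction you build the lower inclusion on is not valid. Multiplying a value $p^j m_0\in\S_0(\mathbb{Z}_p^2)$ by elements of $\S^*(\mathbb{Z}_p^2)$ only produces $p^j m_0 s$, that is, multiples of $m_0$. It does not move $m_0$ to an arbitrary integer $m$ in the same coset of $H$ modulo $p^2$, because $m/m_0$ need not be an integer. Unless $m_0=\pm1$ (which lies in $H$ itself), a representative of a nontrivial coset reaches only a thin subset of that coset; for instance, none of the infinitely many primes in it other than $\pm m_0$. So ``one value per coset for each $j$'' is nowhere near sufficient, and no finite search over polynomials with fixed values can close the argument.

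Your fallback devices do not help either:
\begin{itemize}
\item Every element of $\S_0(\mathbb{Z}_p^2)$ is divisible by $p^{p+3}$, so a product of two of them has exponent at least $2p+6>2p+1$.
\item There is no ``$p^2$-increment'' in $\S(\mathbb{Z}_p^2)$. A polynomial in $x_1$ alone gives $M_{\mathbb{Z}_p}(\cdot)^p$; for example, $1-x+\Phi_p(x)$ gives $p^{2p}$.
\item Your family $A(x_1)+\Phi_p(x_2)B(x_1)$ has determinant $M_{\mathbb{Z}_p}(A+pB)M_{\mathbb{Z}_p}(A)^{p-1}$. As soon as this is divisible by $p$, both factors are, so it is divisible by $p^{2+2(p-1)}=p^{2p}$. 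It therefore never reaches the exponents $p+5,\dots,2p-1$.
\end{itemize}

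The missing idea is an \emph{additive} parameter. The polynomial $\Phi_p(x)\Phi_p(y)$ vanishes at every nontrivial character and equals $p^2$ at $(1,1)$, so
\[
M_G\bigl(F+m\Phi_p(x)\Phi_p(y)\bigr)=\bigl(F(1,1)+mp^2\bigr)\prod_{(i,j)\neq(0,0)}F(w^i,w^j).
\]
It therefore suffices to find a single $F$ with $F(1,1)=0$ and $\prod_{(i,j)\neq(0,0)}F(w^i,w^j)=\pm p^{p+3}$ exactly, with no extra unit cofactor. Then $F+m\Phi_p(x)\Phi_p(y)$ gives $\pm p^{p+5}m$ for every $m\in\mathbb{Z}$. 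This covers all of $p^{p+5}\mathbb{Z}$ at once: no splitting by $j$, no coset bookkeeping, and no appeal to Theorem~\ref{allmultiples}, since $m$ may itself be divisible by $p$.

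This is exactly what the paper does. For $p=7$ it takes $F=1+(x-1)(x^3-1)(y-1)-x^4y$, giving $7^{12}m$, with analogous explicit polynomials for $p=11$ and $13$. The only computation needed is checking that the $p^2-1$ nontrivial values multiply to exactly $p^{p+3}$.

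Incidentally, Theorem~\ref{genallp}\ref{genallp2} concerns $t\ge 2$ and says nothing about $\mathbb{Z}_p^2$. Indeed, for $p=5$ every multiple of $5^{p+3}$ occurs by Theorem~\ref{Z55}, so your motivating claim that the statement ``fails in general at $p+3,p+4$'' is unfounded.
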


This result appears to be new for $p=11$ and $p=13$, though not for $p=7$, as Panraksa \cite{Panraksa49} obtained a complete characterization for $\mathcal{S}_0(\mathbb{Z}_7^2)$.
(In particular, it is established there that the exponents in these inclusions are sharp for $p=7$.)
We include it here since our proof extends to this case naturally.
This theorem immediately implies improved information on $\mathcal{S}_0(\mathbb{Z}_p^n)$ for all $n\geq3$ for these primes, by Theorem~\ref{allmultiples}: for $p=7$, $11$ and $13$ we have that
\[
p^{5+p+2p^2+\cdots+2^{p^{n-1}}}\mathbb{Z} \subseteq \mathcal{S}_0(\mathbb{Z}_p^n).
\]

\section{The group $\mathbb{Z}_3^3$}\label{sec333}

With $w=e^{2\pi i/3}$ we observe that $\mathbb{Z}[w]$ is a unique factorization domain with units $\mathscr{U}=\{\pm 1,\pm w,\pm w^2\}$, with  $3=-w^2(1-w)^2$ and, since $\left(\frac{-3}{p}\right)=\pm 1$ as  $p\equiv \pm 1 \bmod 3$,
the primes $p\equiv1 \bmod 3$ split in $\mathbb{Z}[w]$ while the $p\equiv 2 \bmod 3$ remain prime.
Writing
\[
N(a+bw)=|a+bw|^2=(a+bw)(a+bw^2)=a^2-ab+b^2,
\]
we need to divide the $p\equiv 1 \bmod 9$ according to the form of their factorization.
Let
\begin{equation}\label{eqnT1T2}
\begin{split}
\mathscr{T}_1 & =\{p\equiv 1 \bmod 9 : 3p=N((1-9A)w -(1+9B)),\;  3\nmid A+B\}, \\
\mathscr{T}_2 & =\{p\equiv 1 \bmod 9 : 3p=N((1-9A)w -(1+9B)),\;  3\mid A+B\} .
\end{split}
\end{equation}

We require the set $\mathscr{T}_1$ for our characterization of the integer group determinants of $\mathbb{Z}_3^3$.

\begin{theorem}\label{Z33} Let $G=\mathbb{Z}_3^3$.
\begin{enumerate}[label=(\roman*)]
\item\label{Z331}
The integer group determinants coprime to $3$ are
\be\label{coprime3}  \S^*(G)=\{m : m\equiv \pm 1 \bmod 27\}. \ee
\item\label{Z332}
The group determinants divisible by $3$ are divisible by $3^{18}$.
\item\label{Z333}
All multiples of $3^{20}$ are integer group determinants.
\item\label{Z334}
We achieve determinants of the form  $3^{19} m$, $3\nmid m,$ if and only if $m$ is divisible by a prime $p\equiv 4$ or $7 \bmod 9$, or the square $p^2$ of a prime $p\equiv 2$ or $5 \bmod 9$.
\item\label{Z335}
For the determinants $3^{18}m$, $3\nmid m$,  we achieve all $m$ with $m\equiv \pm 1$ or $\pm 2 \bmod 9$.
We achieve $m\equiv \pm 4 \bmod 9$ if and only if $m$ is divisible by a prime $p\equiv 4$ or $7 \bmod 9$, or a prime $p\in\mathscr{T}_1$, or the square $p^2$ of a prime $p\equiv 2$ or $5 \bmod 9$.
\end{enumerate}
\end{theorem}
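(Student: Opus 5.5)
Parts \ref{Z331} and \ref{Z332} are already in hand: \eqref{coprime3} is \eqref{3coprime} with $n=3$, and the divisibility by $3^{18}$ is Theorem~\ref{pp=3} with $n=3$. The work is in \ref{Z333}--\ref{Z335}. I will use the standard factorization of $M_G(F)$ for $G=\mathbb{Z}_3^3$. Grouping characters by the kernel of the nontrivial ones, there are $13$ subgroups of index $3$, and we get
\[
M_G(F)=F(1,1,1)\prod_{i=1}^{13} N\bigl(f_i(w)\bigr),
\]
where $f_i\in\mathbb{Z}[x]/(x^3-1)$ is the image of $F$ under the projection $G\to\mathbb{Z}_3$ attached to the $i$th hyperplane, and $N(f_i(w))=f_i(w)f_i(w^2)$. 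Each $f_i(1)=F(1,1,1)$, so all thirteen are tied to the same integer $F(1,1,1)$. More precisely, $F$ is determined modulo suitable ideals by these projections, and the key constraint is a congruence relating them. If $3\mid F(1,1,1)$, then every $(1-w)\mid f_i(w)$.

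Writing $v_i=\mathrm{ord}_{1-w}f_i(w)$, the total exponent is $\mathrm{ord}_3F(1,1,1)+\sum v_i$. The bound $18$ comes from $v_i\ge1$ together with a lower bound forced by the relation among the $f_i$. I will make this relation explicit in the form used in the proof of Theorem~\ref{pp=3}: the $3$-adic expansions $F(1,1,1)$ and $f_i(w)\bmod (1-w)^2$ are linear in the coefficients of $F$, so the minimal configurations (exponent $18$ or $19$) can be enumerated modulo $(1-w)^k$ for small $k$. In those configurations the $3$-free part $m$ is the product of $F(1,1,1)/3^a$ and the norms $N(f_i(w)/(1-w)^{v_i})$. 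I will record which residues mod $9$ of these unit parts are forced.

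\textbf{Sufficiency (\ref{Z333}, and the "if" directions).} I will build explicit $F$ as products, using multiplicativity of $M_G$.
\begin{itemize}
\item[(a)] Theorem~\ref{allmultiples} with $n=3$ gives $3^{18}\S(\mathbb{Z}_3^2)\subseteq\S_0(\mathbb{Z}_3^3)$. Combined with \eqref{p=3}, this yields $3^{18}m$ for $m\equiv\pm1 \bmod 9$ and all multiples of $3^{24}$.
\item[(b)] To reach $3^{20}\mathbb{Z}$, $3^{19}m$ and $3^{18}m$ with $m\equiv\pm2$, I will take $F=F_0+\Phi_3(x_1)\Phi_3(x_2)\Phi_3(x_3)\,h$-type perturbations and lifts $F(x_1,x_2,x_3)=g(x_1)+(1-x_2)(\cdots)$. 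The aim is for most $f_i$ to equal $(1-w)$ times a unit while one or two carry an arbitrary element $B+Cw$. Then the norm $N(B+Cw)$ contributes a prime $p\equiv1\bmod3$, or $p^2$ for inert $p$.
\item[(c)] The congruence conditions on $B,C$ modulo $9$ or $27$ imposed by the coupling are exactly what produce the sets of primes $p\equiv4,7\bmod 9$ and $\mathscr{T}_1$. Here $p\equiv1\bmod9$ is allowed only if its prime factor in $\mathbb{Z}[w]$ has a representative of the shape in \eqref{eqnT1T2} with $3\nmid A+B$.
\end{itemize}

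\textbf{Necessity and main obstacle.} The hard step is the converse in \ref{Z334} and \ref{Z335}: showing that $3^{19}m$, or $3^{18}m$ with $m\equiv\pm4$, forces such a prime factor. My plan mirrors Theorem~\ref{genallp}\ref{genallp2} and Theorem~\ref{thmSstarZ3Z9}. In the minimal configuration all but a bounded number of the $f_i(w)/(1-w)$ are forced to be congruent to a unit modulo $(1-w)^3$, say, so their norms are $\equiv1\bmod 9$ or lie in a restricted class. The remaining factor $F(1,1,1)/3^a$ and the exceptional $f_i$ satisfy a congruence. If every prime of $m$ is $\equiv\pm1,\pm2\bmod9$ (only to first power for $p\equiv2,5$) or lies in $\mathscr{T}_2$, I must show the product cannot realize the required residue. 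I expect this to reduce to a statement that elements of $\mathbb{Z}[w]$ built from such primes lie in a subgroup of $(\mathbb{Z}[w]/(1-w)^k)^*$ (mod units) avoiding the needed class.

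I expect this group-theoretic bookkeeping to be the main obstacle: pinning the precise local congruence on $f_i(w)$ forced by the coupling, and matching it against $\mathscr{T}_1$ versus $\mathscr{T}_2$. I will first work it out by computing the image of $\mathbb{Z}[G]$ in $\mathbb{Z}\times\prod_i\mathbb{Z}[w]$ modulo $(1-w)^4$ explicitly.
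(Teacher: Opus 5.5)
Your proposal is a plan rather than a proof, and it leaves undone exactly the parts that make up the substance of (iii)--(v).

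\textbf{Sufficiency.} Your use of Theorem~\ref{allmultiples} is correct, but it only yields $3^{18}m$ with $m\equiv\pm1\bmod 9$ and $3^{24}\mathbb{Z}$. Everything else needs an explicit $F$, and you give none:
\begin{itemize}
\item all of $3^{20}\mathbb{Z}$;
\item $3^{18}m$ with $m\equiv\pm2\bmod 9$;
\item the prime-dependent constructions for $3^{19}m$ and for $3^{18}m$ with $m\equiv\pm4$.
\end{itemize}
Phrases such as ``$\Phi_3\Phi_3\Phi_3h$-type perturbations'' with ``one or two $f_i$ carrying $B+Cw$'' state an aim, not a construction. The paper builds concrete polynomials with prescribed character values. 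For a $\mathscr{T}_1$ prime, for instance, it takes $F(1,1,1)=3m'$ and $F(w,1,1)=\pm\bigl((1-9A)w-(1+9B)\bigr)$, with every other value equal to $3$ or $\pm(w^i-1)$. These constructions need the representations $3p=N(\cdots)$ of Lemma~\ref{norm}. At $3^{19}$ the paper also needs three separate polynomials to cover every cofactor prime to $3$. Your item (c) also does not explain why a $\mathscr{T}_1$ prime suffices at $3^{18}$ but not at $3^{19}$.

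\textbf{Necessity.} The more serious gap is necessity in (iv) and (v). You plan to show that the unit parts lie in a subgroup of $(\mathbb{Z}[w]/(1-w)^k)^*$ that avoids the required class. No purely multiplicative argument of this kind can work. Take $m=22=2\cdot 11\equiv 4\bmod 9$. Both primes are inert, so they must divide $F(1,1,1)=\pm66$, and every other $F(\chi)$ is then a unit times a power of $1-w$. Nothing multiplicative is violated, yet $3^{18}\cdot 22\notin\S(G)$.

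The obstruction is additive. The question is whether the $27$ prescribed values come from integer coefficients via
\[
27a_{IJK}=\sum_{i,j,k} w^{-iI-jJ-kK}F(w^i,w^j,w^k).
\]
The paper handles this in four steps.
\begin{enumerate}
\item It shows that integrality depends only on $F(1,1,1)\bmod 27$ and on the other values modulo $9(1-w)$.
\item Under the hypothesis on $m$, the cofactors coming from primes $\equiv-1\bmod 9$ and from $\mathscr{T}_2$-primes have associates $1+3(1-w)(aw+b)$ with $3\mid a+b$. These contribute multiples of $27$ and can be discarded.
\item It normalizes by replacing $F$ with $x^ay^bz^cF$ and imposes the $\mathscr{U}_1/\mathscr{U}_2$ constraints. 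This leaves finitely many unit configurations: $590{,}490$ for $3^{18}\cdot4$ and $55{,}788{,}550{,}416$ for $3^{19}$.
\item An exhaustive search rules all of these out.
\end{enumerate}
You supply neither this reduction nor any substitute for the final check.

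Your proposed modulus $(1-w)^4$ is also too coarse. Since $\mathrm{Tr}(9\mathbb{Z}[w])=9\mathbb{Z}$, integrality modulo $27$ requires working modulo $(1-w)^5$. Moreover, for a value with $(1-w)\parallel F(\chi)$, the distinction between $\mathscr{T}_1$ and $\mathscr{T}_2$ only becomes visible modulo $(1-w)^5$.
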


We remark that the primes in $\mathscr{T}_1$ less than $5000$ are
\begin{quote}
19, 37, 109, 127, 163, 181, 199, 379, 397, 433, 487, 541, 631, 739, 811, 829, 883, 937, 1063, 1153, 1171, 1279, 1297, 1423, 1459, 1567, 1657, 1693, 1747, 1801, 1873, 1999, 2017, 2053, 2089, 2143, 2161, 2377, 2467, 2503, 2521, 2539, 2557, 2593, 2647, 2683, 2719, 2791, 2917, 2953, 3061, 3169, 3259, 3313, 3331, 3457, 3511, 3547, 3583, 3637, 3673, 3691, 3709, 3727, 3943, 4051, 4159, 4231, 4357, 4447, 4519, 4591, 4663, 4789, 4861, 4987,
\end{quote}
while those in $\mathscr{T}_2$ in the same range are
\begin{quote}
73, 271, 307, 523, 577, 613, 757, 919, 991, 1009, 1117, 1531, 1549, 1621, 1783, 2179, 2251, 2269, 2287, 2341, 2971, 3079, 3187, 3529, 3853, 3889, 3907, 4177, 4339, 4483, 4933, 4951, 4969.
\end{quote}
We remark that the sets $\mathscr{T}_1$ and $\mathscr{T}_2$ also appear in \cite{MP23}, where they are called the Type~1 and Type~2 primes which are $1$ mod $9$.\footnote{These sets are defined slightly differently in \cite{MP23}, as $\mathscr{T}_1=\{p \equiv1\bmod9 : p = N(1+3A(w-1)+Bw, 3\nmid A\}$ and similarly for $\mathscr{T}_2$, but multiplying by $\omega-1$ and applying the norm is easily seen to transform that definition to \eqref{eqnT1T2}.}
For example, it is shown there that
\[
3^4\mathbb{Z} \cap \mathcal{S}_0(\mathbb{Z}_{27}) = \{3^4 p m : p\in\mathscr{T}_1 \textrm{\ and\ } 3\nmid m\}.
\]
Also, from \cite[Lemma 4.9]{MP23}, we have that asymptotically $2/3$ of the primes $p\equiv1\bmod9$ are in $\mathscr{T}_1$ and $1/3$ in $\mathscr{T}_2$.

\section{Proofs of Theorems~\ref{Zpp},~\ref{pp=3} and ~\ref{allmultiples}}\label{secPfs2}

\begin{proof}[Proof of Theorem~\ref{Zpp}]
Suppose that $p\geq 3$ and $G=\mathbb{Z}_p^n$ with $n\geq 2$.
Expanding $F$ in powers of $(1-x_i)$, we write 
$$ F(x_1,\ldots ,x_n)=a_0+ \sum_{i=1}^n a_i(1-x_i) \;\bmod \langle (1-x_i)(1-x_j), \; i,j=1,\ldots ,n\rangle . $$
With $\pi:=1-w$, observe that 
$$1-w^k=(1-w)(1+w+\cdots +w^{k-1})\equiv k\pi  \bmod \pi^2. $$
Suppose that $p\mid M_G(F)$, then $p\mid F(1,\ldots ,1)=a_0$ and $\pi^2\mid a_0$.
Thus
$$ F(w^{j_1},\ldots ,w^{j_n})\equiv  \left( \sum_{i=1}^n a_i j_i  \right) \pi \bmod \pi^2. $$
If $p\mid a_i$ for all $i=1,\ldots ,n$ then $\pi^2\mid F(w^{j_1},\ldots ,w^{j_n})$ for all $j_1,\ldots ,j_n$ giving us an extra $(p^n-1)$ power of $\pi$ and hence $(p^n-1)/(p-1)$ extra $p$ on \eqref{rough}, so exceeding the claimed bounds
in Theorem~\ref{pp=3} and \ref{Zpp}. If $p\nmid a_I$ then for any choice of 
$j_i=0,\ldots ,p-1$ with $i\neq I$, we can solve $j_I\equiv -a_I^{-1}\sum_{i\neq I} a_i j_i \bmod p$ and all those $p^{n-1}$ terms will be
divisible by $\pi^2$; removing the $j_1=\cdots =j_n=0$ term  corresponding to $F(1,\ldots ,1)$,  this gives us an extra $p^{n-1}-1$ powers of $\pi$, and hence $(p^{n-1}-1)/(p-1)$
powers of $p$,  over \eqref{rough}. Hence we get 
\be \label{divrough} p\mid M_G(F) \Rightarrow p^{\frac{p^n-1}{p-1} +\frac{p^{n-1}-1}{p-1}+1}\mid M_G(F). \ee
For $n=2$ or $3$ we have \eqref{n=2} and \eqref{n=3} from \eqref{divrough}.

Suppose now that $p\geq 5$ or $p=3$ and $3^2\mid F(1,\ldots ,1)$  so that $\pi^4\mid F(1,\ldots ,1)$.  
With $G=\mathbb{Z}_p^2$, for a fixed $k\geq0$ take 
$$F_k(x,y)=p^{1+k}+(1-x)+(1-y)^2. $$
Then $F_k(1,1)=p^{1+k}$, and 
$$ F_k(w^s,w^t)\equiv s\pi \bmod \pi^2 $$
for $0\leq t<p$, so that  $\pi \parallel F_k(w^s,w^t)$ except for the terms with $s=0$. For these,
$$ F_k(1,w^t) \equiv t^2\pi^2 \neq 0 \bmod \pi^3 $$
for $t=1,\ldots ,p-1,$  giving us exactly $2(p-1)$ extra $\pi$, so two extra $p$ on \eqref{rough}.

Similarly, for $G=\mathbb{Z}_p^3$ we take
\be \label{extremen=3} F_k(x,y,z)=p^{1+k}+(1-x)+(1-y)^2 -b(1-z)^2 \ee
where $b$ is a quadratic non-residue mod $p$. For $p\geq 5$ or $p=3$ with $k\geq 1$, we have
$$ F_k(w^r,w^s,w^t)\equiv r\pi  \bmod \pi^2 $$
and $\pi \parallel F_k(w^r,w^s,w^t)$ except for the terms $r=0$.
This gains $p^2$ additional factors of $p$ over \eqref{rough}.
For the terms $r=0$, we have
$$ F_k(1,w^s,w^t)\equiv (s^2-bt^2)\pi^2 \bmod \pi^3, $$
where $s^2\not\equiv bt^2 \bmod p$ for $s,t=0,\ldots, p-1$ other than $s=t=0$.
Hence in each of the former $p^2-1$ terms we gain exactly two extra $\pi$, netting $2p+2$ extra powers of $p$, and the $s=t=0$ term contributes exactly $k+1$ additional factors.
The total is an increase of exactly $p^2+2p+3+k$ in the power of $p$ over \eqref{rough}. 

Suppose now that $n\geq 4$. We write
$$ F(x_1,\ldots ,x_n)=a_0+ \sum_{i=1}^n a_i(1-x_i) +\sum_{i,k=1}^n b_{ik}(1-x_i)(1-x_k) +G(x_1,\ldots ,x_n), $$
where $G$ contains the cubic and higher order terms. From
$$  1-w^k = k\pi - \frac{1}{2}k(k-1) \pi^2 \bmod \pi^3, $$
we have
$$ F(w^{j_1},\ldots ,w^{j_n})\equiv  \left( \sum_{i=1}^n a_i j_i  \right) \pi + \left( \sum_{i,k=1}^n b_{ik}j_ij_k -\frac{1}{2}\sum_{i=1}^n a_i  j_i (j_i-1)\right) \pi^2 \bmod \pi^3. $$
As  above we can assume that $p\nmid a_J$, for some $J$, otherwise  $p^{2\cdot\frac{p^n-1}{p-1}+1}\mid M_G(F)$. Suppose that  $p\nmid a_1,$ then we have $\pi^2\mid  F(w^{j_1},\ldots ,w^{j_n})$ for the $p^{n-1}$ values with $j_1\equiv -a_1^{-1}\sum_{i=2}^n a_i j_i$ for $j_2,\ldots ,j_n=0,\ldots ,p-1$. These will have 
$$  F(w^{j_1},\ldots ,w^{j_n})\equiv  Q(j_2,\ldots ,j_n) \pi^2 \bmod \pi^3. $$
where $Q$ is a quadratic in $j_2,\ldots ,j_n$.  We claim that  $Q(j_2,\ldots ,j_n) \equiv 0 \bmod p$ for $p^{n-3}$ choices of 
$j_2,\ldots ,j_n$, giving us  the  $p^{n-3}-1$ extra $\pi$ and so $(p^{n-3}-1)/(p-1)$ extra $p$ as claimed. Suppose first that one of the variables, $j_2$ say, only occurs as a linear term 
$$ Q(j_2,\ldots ,j_n)= j_2L(j_3,\ldots ,j_n)+ Q_2(j_3,\ldots ,j_n),$$
where $L(j_3,\ldots ,j_n)= (\alpha_0+\alpha_3 j_3 +\cdots +\alpha_nj_n)$ with at least one of the $\alpha_i$, say $\alpha_3$,
 not zero mod $p$, and $Q_2$ is quadratic.
Plainly for each of the $p^{n-3}$ choices of $j_4,\ldots ,j_n$ there will be only one $j_3$ with $L_j\equiv 0 \bmod p$. For the remaining 
$p^{n-2}-p^{n-3}$ choices of $j_3,\ldots ,j_n$ we have $L\neq 0$ and we can solve for $j_2$ to give $Q\equiv 0 \bmod p$,
where plainly $p^{n-2}-p^{n-3}>p^{n-3}$.
Hence we can assume that all variables that appear must occur as quadratics, and by completing the square (again ruling out linear terms), we can reduce to the case $Q=\sum_{i=2}^n \alpha_i j_i^2$.
If $Q\equiv 0$ or $\alpha_2 j_2^2$, $p\nmid \alpha_2$, then we have the $p^{n-2}$ solutions with $j_2=0$. So assume that we have at least two $\alpha_i$ with $p\nmid \alpha_i$, say $Q=\alpha_2j_2^2+\alpha_3 j_3^3 + Q_3(j_4,\ldots ,j_n)$. Now for any of the $p^{n-3}$ choices of $j_4,\ldots ,j_n,$ the $\alpha_2j_2^2$ and $-\alpha_3 j_3^2-Q_3$
each take on $(p+1)/2$ values mod $p$, so by the box principle there is a choice of $j_2,j_3$ making $Q\equiv 0 \bmod p$.

For sharpness when $n=4$, we take $b$ a quadratic non-residue mod $p$ and
\be \label{extremen=4}  F_k(x,y,z,t)=p^{1+k}+(x-1)+(y-1)^2-b(z-1)^2+(t-1)^3. \ee
If $p\geq 5$ or $p=3$ and $k\geq 1$, we get $\pi\parallel F_k(w^{i_1},w^{i_2},w^{i_3},w^{i_4})$ unless $i_1=0$, and $\pi^2\parallel F_k(1,w^{i_2},w^{i_3},w^{i_4})$ unless $i_2=i_3=0$, and $\pi^3\parallel F_k(1,1,1,w^{i_4})$ unless $i_4=0$. 

For $n=5$, select a polynomial $x^3 + \sum_{i=0}^2 A_ix^i$ in $\mathbb{Z}[x]$ which is irreducible mod $p$.
For example, we may choose the minimal polynomial for a generator of $\mathbb F_{p^3}$ over $\mathbb{Z}_p$, or when $p\equiv 1 \bmod 3$, we can take $x^3-c$ where $c$ is not a cubic residue mod $p$.
Then set
$$ F_k(x,y,z,t,s)=p^{1+k}+(x-1)+(y-1)^2-b(z-1)^2+(t-1)^3+\sum_{i=0}^2  A_i (t-1)^i(s-1)^{3-i}. $$
We have $\pi \parallel F_k(w^{i_1},w^{i_2},w^{i_3},w^{i_4},w^{i_5})$ unless $i_1=0$, $\pi^2\parallel F_k(1,w^{i_2},w^{i_3},w^{i_4},w^{i_5})$ unless $i_2=i_3=0$, and $\pi^3\parallel F_k(1,1,1,w^{i_4},w^{i_5})$ unless $i_4=i_5=0$.
\end{proof}

\begin{proof}[Proof of Theorem~\ref{pp=3}] 
For $p=3$, we get $3^{2\cdot 3^{n-1}}\mid M_G(F)$ from \eqref{divrough}, where
$$M_G(2+x_1^2)=M_{\mathbb{Z}_3}\left((1-x)+(1+x+x^2)\right)^{p^{n-1}}= 3^{2\cdot 3^{n-1}}. $$
For $n=3$ and $k\geq 1$, we can get $3^{18+k}\parallel M_G(F)$ from \eqref{extremen=3}, and for $n=4$ and $k\geq 1$, we can get $3^{55+k}\parallel M_G(F)$ from \eqref{extremen=4}, with $b=-1$.
For $n=5$ and $k\geq 1$ we can get $3^{166+k}\parallel M_G(F)$ from
$$ F(x,y,z,t,s)=p^{1+k}+(x-1)+(y-1)^2+(z-1)^2+(t-1)^3+\Phi_3(t)(s-1). $$
If $n\geq 4$ and  $3^2\mid F(1,\ldots ,1)$, we showed in the proof of \eqref{n>3} that $p^a\mid M_G(F)$ with 
$$a\geq \frac{p^n-1}{p-1}+ \frac{p^{n-1}-1}{p-1} + \frac{p^{n-3}-1}{p-1} + 2 = 2\cdot 3^{n-1}+\frac{1}{2}(3^{n-3}+1).$$
 If $F(1,\ldots ,1)=3m$ with $3\nmid m$ then the same argument replaces $Q$ by $Q+m$,
and as long as $Q \bmod p$ contains at least two variables the same argument gives at least $p^{n-3}$ zeros and, since this doesn't include $(0,\ldots ,0),$ we get at least $\frac{1}{2}(3^{n-3}+1)$ extra $3$'s over $2\cdot 3^{n-1}$. If $Q=0 \bmod 3$ then $Q+m\not\equiv 0 \bmod p$ and $3^{2\cdot 3^{n-1}}\parallel  M_G(F)$. This leaves the case $m+aj_2^2 \bmod p$. If $-ma^{-1}$ is a quadratic non-residue mod $p$ then again we don't represent zero and  $3^{2\cdot 3^{n-1}}\parallel  M_G(F)$. If it is a quadratic residue then there will be two solutions $k_2$
and $p^{n-2}$ choices for $j_3,\ldots ,j_n$ adding $2\cdot 3^{n-2}$ extra $u$'s and hence $p^a\mid M_G(F)$ with $a\geq 2\cdot 3^{n-1}+ 3^{n-2}$.
\end{proof}

\begin{proof}[Proof of Theorem~\ref{allmultiples}] For $H$ in $\mathbb{Z}[x_2,\ldots ,x_n]$
we have 
\[
M_{\mathbb{Z}_p^n}(x_1-1 +\Phi_p(x_1)H)=p^{2p^{n-1}}M_{\mathbb{Z}_p^{n-1}}(H).
\]
The results for $\mathbb{Z}_p^n$ then follow recursively starting with $\mathbb{Z}_p$:
\[
M_{\mathbb{Z}_p}(x-1+t\Phi_p(x))=tp^2, \; t\in \mathbb{Z}, \quad M_{\mathbb{Z}_p}(\pm (1+x+\cdots + x^{m-1}))=\pm m, \;p\nmid m,
\]
or from $\mathbb{Z}_p^{n-1}$ using \eqref{coprimevalues}.
\end{proof}

\section{Proofs of  Theorems~\ref{withpower}, \ref{genallp}, \ref{twocomponents} and~\ref{thmSstarZ3Z9}}\label{secPfs3}

\begin{proof}[Proof of Theorem~\ref{withpower}]
For $j\geq 1$ we write
\be \label{defpij}\omega_j=e^{2\pi i/p^j},\;\;  \pi_j = \omega_{j}-1,\quad \abs{\pi_j}=p^{-1/\phi(p^j)},\quad \pi_{j+1}^p \mid \pi_j. \ee
In addition to $\mathbb{Z}_p \times \mathbb{Z}_p^{n-1}$, for each $2\leq j\leq t$ we have $\phi(p^j) p^{n-1}$ terms
$$ \pi_j \mid F\left(\omega_{j}^{j_1},\omega^{j_2},\ldots ,\omega^{j_n}\right), \;\; 1\leq j_1<p^j, \; \gcd(j_1,p)=1,\;\; 0\leq j_2,\ldots ,j_n<p, $$
contributing $p^{p^{n-1}}$. For the examples showing sharpness for $\mathbb{Z}_p^n$ it is easily seen that $\pi_j \parallel F(x_1,\ldots ,x_n)$ for these extra terms.
\end{proof}

\begin{proof}[Proof of Theorem~\ref{genallp}]
The upper inclusion in \ref{genallp1}, and the sharpness of the exponent here, is from Theorem \ref{withpower}, the lower 
from
\[
M_G\left( x-1+ \prod_{j=1}^t \Phi_{p^j}(x)\left(y-1 +m \Phi_p(y)\right)\right)= mp^{2tp+2}.
\]
In addition,
\[
M_G\left(x-1+\left(\frac{x^{p^t}-1}{x-1}\right)\right)= p^{2tp}.
\]
For \ref{genallp2}, suppose that  $M_G(F)=p^{pt+3}m,$ $p\nmid m,$  with $t\geq 3$ or $t=2$ with $p\geq 5,$ and write
$$ F(x,y)=A+B(x-1)+C(y-1) +\sum_{i+j\geq 2} a_{ij}(x-1)^i(y-1)^j. $$
When $p\nmid B$ then it is readily seen from the proof of Theorems~\ref{Zpp} and~\ref{withpower} that $M_{\mathbb{Z}_{p^t}}(F(x,1))=p^{t+1}m_1,$ for some $m_1\mid m$, so $m=1$ is not possible by \eqref{eqnNewmanRestric}.
If $p\mid B$ then $\pi_i^2\mid F(\omega_i,y),$ $i=2,\ldots ,t$, giving an additional $p^{(t-1)p}$.

For \ref{genallp3}, when $G=\mathbb{Z}_9 \times \mathbb{Z}_3$ we have $M_G(F)=3^{11}(3m\pm 1)$ from 
$$ m\left( \frac{x^9-1}{x-1}\right) \left( \frac{y^3-1}{y-1}\right) \pm \left(x^7-1 +y(1+x+x^2+x^3+x^4+x^8)+y^2(1+x+x^2)\right) $$
and $3^{12}m$ from
$$ F(x,y)=m\left( \frac{x^9-1}{x-1}\right) \left( \frac{y^3-1}{y-1}\right) -(1 +x + x^8) +y(x^2-1) + y^2(x^3+x^2+x).$$
Finally, for \ref{genallp4}, the lower bound in \eqref{lower3s} follows from recursively applying
\be \label{iterate} G=\mathbb{Z}_{p^t}\times G',\;\; M_G\left(x_1-1 + \left(\frac{x_1^{p^t}-1}{x-1} \right)f(x_2,\ldots ,x_n)\right)=p^{2t|G'|}M_{G'}(f) \ee
starting with $M_{\mathbb{Z}_3^2}(f)=m3^6$ to produce $m3^{6+2\cdot 3^2+2\cdot 3^3+\cdots + 2\cdot 3^{n-2}+4\cdot  3^{n-1}}$.
\end{proof}

\begin{proof}[Proof of Theorem~\ref{twocomponents}]
With $\omega_j$ and $\pi_j$ as in \eqref{defpij} we note that
\be \label{omegacong}  \omega_{j}^k -1 \equiv k\pi_j + \frac{1}{2}k(k-1)\pi_j^2 \text{ mod } \pi_j^3. \ee
Suppose that 
$$ F(x,y)=A+B(x-1)+C(y-1)+\sum_{u+v\geq 2} a_{uv}(x-1)^u(y-1)^v, $$
with $p\mid M_G(F).$
Then $p\mid A$ and for $x$ and $y$ primitive $p^j$th and $p^i$th roots of unity respectively, we have $\pi_{\max \{i,j\} } \mid F(x,y).$ 
For each pair $i,j$, $0\leq j \leq t$, $0\leq i\leq s,$  these terms therefore contribute at least $\phi(p^{\min\{i,j\}})$ many $p$'s to $M_G(F)$.
For $j\leq s$ we have
$$ \sum_{i=0}^s  \phi \left( p^{\min\{i,j\}}\right) = \sum_{0\leq i\leq  j} \phi (p^i) + \sum_{i=j+1}^s \phi(p^j)=p^j+ (s-j)\phi(p^j).    $$
For $j>s$ we have
$$ \sum_{i=0}^s  \phi \left( p^{\min\{i,j\}}\right) = \sum_{0\leq i\leq  s} \phi (p^i) =p^s.    $$
Hence in total we have
\begin{align*}  \sum_{j=0}^s \left(p^j + (s-j)\phi(p^j)\right) + \sum_{j=s+1}^t p^s  & = \sum_{j=0}^s p^j+ \sum_{j=0}^{s-1} p^j+  (t-s)p^s \\
& =p^s+2\left(\frac{p^s-1}{p-1}\right) +(t-s)p^s. 
\end{align*}

Suppose first that $p\nmid BC$. For the $i\neq j$ plainly $\pi_{\max \{i,j\} } \parallel  F(x,y).$ 
For the terms with $i=j$, $1\leq j\leq s$, we have by \eqref{omegacong}
$$ F( \omega_j^u,\omega_j^v)\equiv (Bu+Cv)\pi_j \text{ mod } \pi_j^2,\hspace{3ex} 1\leq u,v<p^j, \gcd(uv,p)=1. $$
Hence we have $\pi_j^2 \mid F( \omega_j^u,\omega_j^v)$ for the $\phi(p^j)p^{j-1}$ 
values $u,v$ coprime to $p$ with $u\equiv -CB^{-1}v \bmod p$ and $\pi_j\parallel F( \omega_j^u,\omega_j^v)$
for the others. These contribute an extra $\sum_{j=1}^s p^{j-1}$ many $p$'s and the exponent $\alpha$ claimed.
Notice that if we take 
$$F(x,y)=p^{k+1}-(x-1)+c(y-1), \quad \left(\frac{c}{p}\right)=-1,$$
then \eqref{omegacong} gives   (setting $c=-1$ and checking separately for $j=1$ if $p=3$, $k=0$)
$$ F( \omega_j^u,\omega_j^v)\equiv (cv-u)\pi_j + \frac{1}{2}\left(cv(v-1)-u(u-1)\right) \pi_j^2\;\;   \text{ mod } \pi_j^3, $$
and for the $u\equiv cv \bmod p$
$$ F( \omega_j^u,\omega_j^v)\equiv 2^{-1}(cv^2-u^2)\pi_j^2 \not\equiv 0 \text{ mod } \pi_j^3, $$
and  $p^{\alpha+k}\parallel M_G(F)$.

If $p\mid B$ then $\pi_j^2\mid F(x, y)$  if $x$ is a primitive $p^j$th root of unity and $y$ is any $p^{(j-1)}$th root of unity. Thus we get an extra $\sum_{j=1}^s p^{j-1} + \sum_{j=s+1}^t p^s$ contribution of $p$'s.

Likewise, if $p\mid C$ then $\pi_i^2 \mid F(x,y)$ for a primitive $p^i$th root of unity $y$ and $x$ any $p^{i-1}$th root of unity, giving again an extra $p^{i-1}$ many $p$'s for each $1\leq i\leq s$. 
In this case we can also get equality by taking 
$F(x,y)=p^{1+k}+(x-1)-(y-1)^2$; it is readily checked that $\pi_i^2\parallel F(x,y)$ in these extra cases and 
$\pi_{\max\{i,j\}}\parallel F(x,y)$ in the others.
To obtain in  the lower bound \eqref{lowerks}, use \eqref{iterate} with 
$M_{\mathbb{Z}_{p^s}}\left(1-y+ m\left(\frac{y^{p^s}-1}{y-1}\right)\right)=mp^{2s},$ while
$M_G\left(1-x+ m\left(\frac{x^{p^t}-1}{x-1}\right)\right)=p^{2kp^s}.$
\end{proof}

\begin{proof}[Proof of Theorem~\ref{thmSstarZ3Z9}]
Write 
\[
F(x,y)=f(y)+(x-1)g(y)+(x-1)^2h(y),
\]
so that the corresponding $\mathbb{Z}_3\times \mathbb Z_9$ determinant is 
\[
D=\prod_{x^3=1}\prod_{y^9=1} F(x,y)=A\abs{B}^2,\quad A=\prod_{y^9=1}f(y),\;\; B=\prod_{y^9=1}F(\om ,y).
\]
From \cite[Theorem~2.1]{pgroups}, we have $\abs{B}^2\equiv A^2\bmod 27$, so that a determinant $D$ with $3\nmid D$ necessarily satisfies 
\[
D\equiv A^3\equiv \pm 1,\pm 8, \pm 10 \bmod 27.
\]

We show first that each of the values described in the statement is realized as a determinant.
To achieve the $D=\pm 1 + 27m$, we take
\[
F(x,y)=\pm 1 + m(x^2+x+1)\frac{y^9-1}{y-1},
\]
and to achieve $D=A N( A+9(1-\om)u -27\om v)$, we assume $A>0$ (otherwise multiply by $-1$) and take
\[
F(x,y) = \frac{y^A-1}{y-1}+u (1-x)\frac{y^9-1}{y-1} + v(1-x)^2\frac{y^9-1}{y-1} \bmod (y^9-1).
\]
This has $F(1,1)=A$, $F(\om,1) = A + (1-\om)9u - 27\om v$, and $F(\om^2,1)$ its conjugate.
The remaining $F(x,y)$ are $(y^A-1)/(y-1)$ and have resultant $1$ with the third and ninth roots of unity.
We thus obtain \eqref{nice} with $B=A+9u$ and $C=-9u-27v$.

Next, we show that all determinants $D\equiv\pm8$ of $\pm10\bmod27$ have the form \eqref{nice}.
Write
\[
\Gamma := \prod_{y^9=1}\left( f(y)+xg(y)+x^2h(y) \right)= A + \sum_{j=1}^{18} A_j x^j,
\]
where the $A_j$ are integer polynomials in the $27$ coefficients of $f(y)$, $g(y)$ and $h(y)$.
We claim that $A_1,A_2\equiv 0\bmod 9$ and $A_3,A_4\equiv 0\bmod 3$. 
Since $|y-1|_3<1$ for the ninth roots of unity, we have $\Gamma\equiv (f(1)+xg(1)+x^2h(1))^9 \bmod 3$, and so plainly $A_1,\ldots ,A_8 \equiv 0\bmod 3$.
To see that $A_1,A_2\equiv 0 \bmod 9$, we calculate 
\[
\prod_{y^9=1} \left( f(y)+xg(y)  \right)
\]
and verify that the coefficients of $x$ and $x^2$ (integer polynomials in the 18 coefficients of $f(y)$ and $g(y)$) are both multiples of $9$.
We do not need the $x^2h(y)$ term since the $x^2$ terms stemming from any $x^2h(y)$ term will be the same as the $x$ term arising in the product of $f(y)+xh(y)$ and so is also a multiple of $9$.
Observing that $(1-\om)^2=-3\om$, we obtain
\begin{align*}
B & = A+9\alpha_1 (1-\om)+ 9\alpha_2(1-\om)^2+3\alpha_3 (1-\om)^3\\
&\qquad +3\alpha_4(1-\om)^4+(1-\om)^5(\alpha_5+\alpha_6\om) \\
 & = A + (1-\om)(9\beta_1+9\beta_2 \om)+ 27(\beta_3+\beta_4\om)\\
 & = A + 9(1-\om)(\beta_1+\beta_2+3\beta_3)+27\om (\beta_2+\beta_3+\beta_4).
\end{align*}
Thus $D=A|B|^2$ has the form \eqref{nice} as required.
\end{proof}

\section{Proofs of Theorems~\ref{Z55} and~\ref{p>5}}\label{secPfs4}

Suppose that $G=\mathbb{Z}_5^2$. From \eqref{coprimevalues} we know that the values coprime to $5$ are the $x^4\equiv 1$  mod $25$, that is $x\equiv \pm 1,\pm 7 \bmod 25$.
With $g_p=\Phi_p(x)\Phi_p(y)$, we get the $5^9m$ from
$$ (y^4-1)+x(1-y^2+y^4)-x^2(y^2+y^3)+x^3(1+y-y^4)+x^4(y^2-y) +mg_5.    $$
We get the $5^8(1+5m)$ from
$$ (y^4-1)+x(1-y^2+y^4)+x^2(1+y+y^4)+x^3(1+y-y^4)+x^4(y^2-y) +mg_5,    $$
the $5^8(2+5m)$ from 
$$ (1+y^2+y^3) +x(1+y+y^2+y^3)+x^2(y+y^2+y^4)+mg_5, $$
and  the $-5^8(1+5m),-5^8(2+5m)$ from their negations. 

\pushQED{\qed}
For $G=\mathbb{Z}_p^2$ with $p=7$, $11$, $13$ we have
\begin{align*}
&M_{\mathbb{Z}_7^2}\left(1 + (x-1) (x^3-1) (y-1) - x^4 y+mg_7\right) = 7^{12}m,\\
&M_{\mathbb{Z}_{11}^2}\left(1 + (x-1) (x^3-1) y(y-1) - x^4 y^3+mg_{11}\right)=11^{16}m,\\
&M_{\mathbb{Z}_{13}^2}\left(1 + (x-1) (x^5-1) (y-1) - x^6 y+mg_{13}\right)=13^{18}m.\qedhere
\end{align*}
\popQED
While these examples possess some evident structure, we remark that some searches over polynomials with similar forms did not uncover examples for larger primes.

\section{Proof of Theorem~\ref{Z33}}\label{secPfs5}

We require the following lemma.
\begin{lemma}\label{norm}
Let $p$ be a prime number.
\begin{enumerate}[label=(\roman*)]
\item\label{norm1}
If $p=1 \bmod 3$ we can write
\be \label{form1}  p=N( (3a-1)w+(3b-1)) \ee
with $a+b\equiv 1$, $-1$ or $0 \bmod 3$  as $p\equiv 7$, $4$ or $1 \bmod 9$.
Alternatively, we can write
\be \label{form2}  p=N( 2-3a-3bw) \ee
with $a+b\equiv -1$, $0$ or $1 \bmod 3$  as $p\equiv 7$, $4$ or $1 \bmod 9$.
\item\label{norm2}
If $p\equiv 1 \bmod 9$ then we can write
\be \label{p=1}  3p=N((1-9A)w-(1+9B)) .\ee
\item\label{norm3}
If  $p\equiv 4 \bmod 9$ then we can write
\be \label{p=4}  3p=N((2+9A)w-(2-9B)), \ee
with $A+B \equiv 0 \bmod 3$ or with either  $A+B\equiv \pm 1 \bmod 3$.
\item\label{norm4}
If $p\equiv 7 \bmod 9$ then we can write
\be \label{p=7}  3p=N((1+9A)w+(5+9B)), \ee
with $A+B\equiv 1 \bmod 3$ or with either $A+B\equiv -1$ or $0 \bmod 3$.
\end{enumerate}
\end{lemma}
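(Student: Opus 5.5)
The plan is to use unique factorization in $\mathbb{Z}[w]$ together with the action of the six units and of complex conjugation on a generator of a prime of norm $p$. For $p\equiv 1\bmod 3$, $p$ splits as $p=\pi\bar\pi$ with $N(\pi)=p$. The associates and conjugates of $\pi$ then give twelve elements $a+bw$ of norm $p$. Everything will be normalized modulo $3$ and modulo $9$ by choosing among these twelve.

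\emph{Part \ref{norm1}.} Write $\pi=c+dw$. Since $p\equiv 1\bmod 3$, we have $N(\pi)=c^2-cd+d^2\equiv(c+d)^2 \bmod 3$, so $c+d\not\equiv 0$ and $(c,d)\not\equiv(0,0)\bmod 3$. Multiplying by units permutes the residue classes of $(c,d)\bmod 3$ ($\pi\mapsto w\pi$ sends $(c,d)\mapsto(-d,c-d)$). One checks that one can reach $c\equiv d\equiv -1\bmod 3$, which gives \eqref{form1}, and also $d\equiv 0$, $c\equiv 2$, which gives \eqref{form2}.

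Next expand the norm. With $c=3b-1$ and $d=3a-1$,
\[
N=c^2-cd+d^2=1-3(a+b)+9(a^2-ab+b^2)\equiv 1-3(a+b)+9(a+b)^2 \bmod 9 .
\]
Hence $p\bmod 9$ is $1$, $4$ or $7$ according as $a+b\equiv 0$, $-1$ or $1\bmod 3$, matching the statement. The same computation for $2-3a-3bw$ gives $4-12a+\cdots\equiv 4+6a-6b\ldots$. I will just reduce this modulo $9$ carefully to get the stated correspondence for \eqref{form2}.

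\emph{Parts \ref{norm2}--\ref{norm4}.} Multiply a norm-$p$ element by $(w-1)$, which has norm $3$. Take $\pi=(3a-1)w+(3b-1)$ from \eqref{form1}, with $\bar\pi$ and unit multiples also available. Expanding $(w-1)\pi$ with $w^2=-1-w$ gives an element $Xw+Y$ of norm $3p$ whose coefficients are explicit linear expressions in $a,b$ with small constants. The goal is to choose the unit, possibly conjugate, and reduce modulo $9$ so that the coefficients become $1-9A,\,-(1+9B)$, or $2+9A,\,-(2-9B)$, or $1+9A,\,5+9B$. The congruence class of $p\bmod 9$, that is $a+b\bmod 3$, is exactly what makes the needed residues mod $9$ appear.

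For \ref{norm3} and \ref{norm4} the additional claims are that the classes of $A+B\bmod 3$ can be prescribed in the stated way. I would handle this by comparing different associates or conjugates giving the same shape, since there are several representations, and checking how $A+B\bmod 3$ changes between them. The main obstacle is this bookkeeping: finding which unit or conjugation preserves the shape modulo $9$ while shifting $A+B$ by $\pm1\bmod 3$. In case \ref{norm2} no such shift occurs, which is why $\mathscr{T}_1$ and $\mathscr{T}_2$ are distinct. I will compute $w^{\pm1}$ and conjugation on $Xw+Y$ modulo $27$ directly and tabulate the results.
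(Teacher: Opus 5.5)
Your strategy is the paper's: normalize a generator of norm $p$ modulo $3$ by units and conjugation, read $p \bmod 9$ off the norm, multiply by an element of norm $3$, re-normalize modulo $9$, and use a shape-preserving associate or conjugate to control $A+B$. Part (i) is essentially fine, since the six units act simply transitively on the six admissible classes of $(c,d)\bmod 3$. There is one slip. You have $N(2-3a-3bw)=(2-3a)^2+3b(2-3a)+9b^2\equiv 4+6(a+b)\bmod 9$, not $4+6a-6b$, and only the correct expression gives $p\equiv 7,4,1$ as $a+b\equiv -1,0,1$. (The paper instead gets the second form from the first via $-w\bigl((3a-1)w+(3b-1)\bigr)=2-3(1-a)-3(b-a)w$.)

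The genuine gap is that (ii)--(iv) are only announced, while their whole content is the computation you defer. Your picture of what it will show is also partly off.

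\emph{The products.} The bare product does not work: $(w-1)\pi=(1-6a+3b)w+(2-3(a+b))\equiv w+2\bmod 9$ when $3\mid a+b$. What works, and is what the paper does, is $(1-w^2)\bar\pi=(1-9a+3(a+b))w-(1+9a-6(a+b))$. This has shape (ii) when $3\mid a+b$, and its negative has shape (iii) when $a+b\equiv-1$. When $a+b\equiv 1$ it is $(4-9s)w+(5-9t)$, whose product with $w$ is $(1+9(s-t))w+(5+9(s-1))$, of shape (iv).

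\emph{The claims about $A+B$.} Each residue mod $9$ occurring among the twelve elements of norm $3p$ is taken by exactly two of them, and these two are related by an involution. For shape (iii) it is $z\mapsto-w\bar z$ (that is, $xw+y\mapsto -yw-x$), acting as $A+B\mapsto-(A+B)$. For shape (iv) it is $z\mapsto-\bar z$, acting as $A+B\mapsto -1-(A+B)$. These are reflections modulo $3$ with fixed classes $0$ and $1$ respectively. That is precisely why the statement is a dichotomy rather than a free choice of $A+B$, and you need to exhibit these two maps explicitly.

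Your side remark on (ii) is also inaccurate. The map $z\mapsto -w\bar z$ preserves shape (ii) as well and acts there exactly as in (iii). So what is invariant in (ii) is only whether $3\mid A+B$, not the class of $A+B$ itself.
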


\begin{proof} Since $p\equiv 1 \bmod 3$ factors in $\mathbb{Z}[w]$, we can write
$$ p=N(aw+b)=a^2-ab+b^2. $$
Plainly we can not have both $3\mid a$ and $3\mid b$, or $b\equiv -a \bmod 3$, else $N(a+bw)\equiv 0 \bmod 3$ and $p\neq 3$. 
Since $N(aw+b)=N(a+bw)=N(-aw-b)$, we can assume that $3\nmid a$, then $a\equiv -1 \bmod 3$. If $3\mid b$ we
have $N(aw+b)=N(-aw^2-b)=N(aw+(a-b))$, so we can assume that $a\equiv b\equiv -1 \bmod 3,$ giving us form \eqref{form1}.
From 
$$ N((3a-1)w+(3b-1))=1-3(a+b)+9(a^2-ab+b^2) $$
we have $a+b\equiv 1,-1$ or $0 \bmod 3$ as $p\equiv 7,4$ or $1 \bmod 9$ respectively.
Writing
$$ -w \left( (3a-1)w + (3b-1)  \right) = 2-3(1-a) - 3(b-a)w $$
produces \eqref{form2}.
If $p=N( (3a-1)w+(3b-1))$ then $3p$ is the norm of 
$$ (1-w^2)( (3a-1)w^2+(3b-1))=( 1-9a+3(a+b))w- (1+9a-6(a+b)).   $$
When $3\mid a+b $ we get form \eqref{p=1}, and when $a+b\equiv -1 \bmod 3$ we get, from its negative, \eqref{p=4}.
In the latter case, $N((2+9A)w-(2-9B))=N((2-9B)w-(2+9A))$, and a representation with $A+B\equiv 1 \bmod 3$ gives one with $A+B\equiv -1 \bmod 3$ and vice versa.

When $a+b\equiv 1 \bmod 3$ we get 
$$ 3p=N( (4-9s)w+(5-9t)), $$
and
$$ (4-9s)w^2+(5-9t)w= 5+9(s-1)+ (1+9(s-t))w $$
yields the form \eqref{p=7}.  Since 
$$ -(1+9A)w^2-(5+9B) = (1+9A)w + (5+9(A-B-1)) $$
has the same norm as $(1+9A)w+(5+9B)$, a representation with $A+B\equiv -1 \bmod 3$ will give one with $A+B\equiv 0 \bmod 3$ and vice versa.
\end{proof}

Since
\[
(1-9A)w-(1+9B) =-(1-w)\left(   1+3(1-w) \left( Bw + (B-A)\right) \right)
\]
we can equivalently write 
\begin{align*}
\mathscr{T}_1 & =\{p\equiv 1 \bmod 9 : p=N(1+3(1-w)(aw+b)),\;  3\nmid a+b\} \\
\mathscr{T}_2 & =\{p\equiv 1 \bmod 9 : p=N(1+3(1-w)(aw+b)),\;  3\mid a+b\}.
\end{align*}
Notice that if $p\equiv -1 \bmod 9$ then 
$$ -p=1+9t= 1+3(1-w)(tw +2t) $$
is also of the form $1+3(1-w)(aw+b)$, $3\mid (a+b)$.
Moreover, observe that the conjugate of a number of this form is of this form:
$$   1+3(1-w^2)(aw^2+b) = 1+3(1-w)((b-a)w+b), $$
as is the product of two numbers of this form:
\begin{equation*}
\begin{split}
&\left(1+3(1-w)(a_1w+b_1) \right)\left( 1+3(1-w)(a_2w+b_2)\right) =\\
&\qquad 1+3(1-w)( (a_1+a_2+3\kappa) w+(b_1+b_2+3\lambda) ).
\end{split}
\end{equation*}
Also, $-(1-w)$ times a number of this form  will be of the form $(1-9A)w-(1+9B)$ with $3\mid A+B$.

\begin{proof}[Proof of Theorem~\ref{Z33}]
The values \eqref{coprime3} coprime to $3$ in \ref{Z331} follow from \eqref{3coprime}, and \ref{Z332} is from Theorem~\ref{pp=3}. 
Set
$$ g:=\Phi_3(x)\Phi_3(y)\Phi_3(z). $$
For \ref{Z333}, the multiples $3^{20}$ are achieved with
$$ M\left( 1+2x-x\Phi_3(y)+\Phi_3(x)(z-1)+ m g\right)=3^{21}m $$
and
$$ M\left( 1+2x-x\Phi_3(y)+\Phi_3(x)(z-1)+ \Phi_3(x)\Phi_3(y)+mg \right)=3^{20}(1+3m) $$
or its negative. 

For \ref{Z334}, we achieve the $3^{18}m$ with $m\equiv \pm 1$ or $\pm 2 \bmod 9$ using
\begin{gather*}
 M\left( 1-x + \Phi_3(x)+mg \right) = 3^{18}(1+9m), \\
 M\left( x\Phi_3(y)-(1+2x)+(1+z)\Phi(x)+mg \right) = 3^{18}(2+9m), 
\end{gather*}
or their negatives.

Suppose now that we want $3^{18}m$ with  $m\equiv 5 \bmod 9$, replacing $F$ by $-F$ to get the $m\equiv 4 \bmod 9$.
Suppose first that $m=km'$ with $k \equiv 7 \bmod 9$, $m'\equiv 2 \bmod 9$  where $k=p$  is a prime  with $p\equiv 7 \bmod 9$ or  $k=p^2$ the square of a prime  $p\equiv 5 \bmod 9$
so that we can write 
$$ k=N((3A-1)w+(3B-1)),\;\; A+B\equiv 1 \bmod 3, $$
from \eqref{form1} in Lemma~\ref{norm} when $k=p$ and from $p^2=N\left((3(3t+2)-1)w+3(3t+2)-1\right)$ when $p=9t+5$,
 and achieve $3^{18}km'$ with
$$ F=(Ay+B) \Phi(z)\Phi(x) -\Phi(x)+ (x\Phi(y)-y(2x+1)) +m_1 g, \quad m_1=\frac{1}{9}(m'+1-3(A+B)). $$

Suppose next that $m=km'$ with $k \equiv 4 \bmod 9$, $m'\equiv 1 \bmod 9$  where $k=p$  is a prime  with $p\equiv 4 \bmod 9$ or  $k=p^2$ the square of a prime  $p\equiv 2 \bmod 9$
so that we can write
$$ k=N((2-3B)-3Aw),\quad 3\mid A+B, $$
from \eqref{form2} when $k=p$ and $p^2=N(2-3(3t))$ when  $p=2-9t$, and 
$$ F=\Phi(x)-y( x\Phi(z)-(1+2x) ) -\Phi(x)\Phi(y)(Az+B) +m_1g,\quad m_1=\frac{1}{9}(m'-1+3(A+B)). $$
produces $3^{18}m'k.$

Suppose that  $m=pm'$ with $p\equiv 1 \bmod 9$  in $\mathscr{T}_1$  and $m'\equiv 4 \bmod 9$. Then by \eqref{p=1} in Lemma~\ref{norm} we can write
$$ 3p=N( (1-9A)w-(1+9B)),   \hspace{3ex} 3\nmid A+B, $$
and we can achieve $3^{18}pm'$ with 
$$ F=E(x-1) +\Phi(x)-E(Ax+B )\Phi(z)\Phi(y)+m_1g, $$
where
$$ E:=\begin{cases} -1 & \hbox{ if $A+B\equiv 1 \bmod 3$},\\ 1 & \hbox{ if $A+B\equiv -1 \bmod 3$},\end{cases} \hspace{3ex} m_1:=\frac{1}{9}(m'-1+3E(A+B)). $$

Suppose now that we try to make a determinant of the form $3^{18}m$, $m\equiv 4 \bmod 9$ where $m$ contains no prime $p\equiv 4$ or $7 \bmod 9$ or from $\mathscr{T}_1$ or the square or a prime $2$ or $5 \bmod 9$.
Notice to obtain $3^{18}\parallel M$ we must have $3\parallel F(1,1,1)$, $3\parallel  F(w^i,w^j,w^k)$ for eight other terms, and $(1-w)\parallel F(w^i,w^j,w^k)$ for the remaining 18 terms.
Supposing that the expansion of $F$ has linear terms $a(x-1)+b(y-1)+c(z-1)$, we know that we cannot have $3\mid a,b,c$ (else $3$ divides all terms), and by exchanging a pair of variables if necessary we can assume $3\nmid a$.
With the change of variables $x\mapsto x y^{-ba^{-1}}z^{-ca^{-1}}$ and reducing exponents  mod $3$, we can assume that $3\mid b,c$ and $3\parallel F(1,w^j,w^k)$.
Since a prime $p\equiv 2 \bmod 3$ does not split in $\mathbb{Z}[w]$, and any $p\mid F(w^i,w^j,w^k)$  must also divide its conjugate, and  $m$ is not divisible by $p^2$ with $p \bmod 2$ or $5 \bmod 9$, we see that any primes in one of these two residue classes mod $9$ must divide $F(1,1,1)$. Hence, apart from the powers of $1-w$, the $F(w^i,w^j,w^k)$ other than $F(1,1,1)$ can only be divisible by products of primes $p=\pm 1 \bmod 9$
or factors of primes  $p\equiv 1 \bmod 9$ in $\mathscr{T}_2$. As discussed above this makes
\begin{align*} F(1,1,1) & =\pm 3(4+9\alpha),\\
 F(1,w^j,w^k) & = 3(1+3(1-w)(\alpha w +\beta)) u, \;\; 3\mid \alpha +\beta, \\
 F(w^i,w^j,w^k) & = ((1-9\alpha)-(1+9\beta )w) u, \;\; 3\mid \alpha+\beta, 
\end{align*}
where the $u$ are units. Observe that if $F(x,y,z)=\sum_{i,j,k=0,1,2} a_{ijk}x^iy^jz^k$ then the coefficients can be recovered
from the values $F(w^i,w^j,w^k)$:
\be \label{RecoverCoeff}  27 a_{IJK}= \sum_{i,j,k=0,1,2} w^{-iI-jJ-kK} F(w^i,w^j,w^k), \ee
and will correspond to an integer polynomial when the sums on the right are all multiples of 27. 
Notice that for $3\mid \alpha + \beta$ we have
$$  9(1-w)(\alpha w+\beta)= 27(1-w)\frac{(\alpha+\beta)}{3} + 27w\alpha,  $$
while
$$-(9\alpha+9\beta w)u =-27 \frac{(\alpha+\beta)}{3} u +9\beta(1-w)u,  $$
where $(1-w)u+(1-w^2)\bar{u}=\pm 3,0,\mp 3$ as $u=\pm 1,\pm w,\pm w^2$, so that when adding $w^{-iI-jJ-kK}F(w^i,w^j,w^k)$
and its conjugate the terms involving $\alpha$, $\beta$ contribute a multiple of 27.  Hence the existence of a solution 
depends on whether there is a solution when all the $\alpha$, $\beta$ are zero.  That is, replacing $F$ by $-F$ as necessary, 
we need only consider
$$ F(1,1,1)=12, \quad  F(1,w,w^k)=3u_{k},\; k=0,1,2, \quad F(1,1,w)=3u, $$
$$ F(w,w^j,w^k)=(1-w)u_{jk},\;\;  j,k=0,1,2,  $$
with units $u$, $u_{k}$ and $u_{jk}$, the other values coming from the conjugates. Moreover, dividing the units as to whether $u\equiv \pm 1 \bmod 1-w$, 
$$ \mathscr{U}_1:= \{1,w,w^2\},\quad       \mathscr{U}_2:= \{-1,-w,-w^2\},  $$
and observing that $\sum_{j=0,1,2}   F(w,w^j,w^k)$ and  $\sum_{k=0,1,2}   F(w,w^j,w^k) \equiv 0 \bmod 3$, it is readily seen that the $u_{jk}$ are all in $\mathscr{U}_1$ or all in $\mathscr{U}_2$. Similarly, since 
$$\sum_{j,k=0,1,2}F(1,w^j,w^k)\equiv 0 \bmod 9, $$
 we see that if $t$ of the units $u,u_0,u_1,u_2$ are in $\mathscr{U}_1$ and $(4-t)$ are in $\mathscr{U}_2$, then $4+2t-2(4-t)\equiv 0 \bmod 3$, so that $t=1$ or $4$. Replacing $F$ by $x^a y^b z^c F \bmod \langle x^3-1, y^3-1, z^3-1 \rangle$, 
we can also assume that $u_{00}$, $u_0$, $u$ are $\pm 1$. That is we have only $2\cdot (1 + 4)\cdot 3^{10}=590,490$ different choices of the $13$ units.
An exhaustive search of all possible choices shows that no solution to $3^{18}\cdot 4$ exists with $F(1,1,1)=12$.
It follows that the only determinants we can achieve of the form $3^{18}m$ with $3\nmid m$ are those stated in the theorem.

Consider now the $3^{19}m$ with $3\nmid m$ for \ref{Z335}.
Suppose first that $m=km'$ where $ k\equiv 7 \bmod 9$ is a prime $p\equiv 7 \bmod 9$, or the square of a prime $p\equiv 5 \bmod 9$. So we may write
$$ 3k=N((5+9B) + (1+9A)w),   \;\; 1+ A+B\equiv \pm 1  \bmod 3, $$
from \eqref{p=7} when $k=p$ and from $3p^2=N((1+2w)p)=N(5+9t+ (1+9(1+2t))w)$ when $p=5+9t$, 
and one can get $3^{19} km'$
with
$$F=\lambda \Big(  \Phi(x)+y\Phi(z)+z\Phi(y) +zy(x-1) + (Ax+B) \Phi(y)\Phi(z)\Big)+ m_1 g$$
with $\lambda=\pm 1$ chosen so that $\lambda (1+A+B)\equiv m' \bmod 3$ and $m_1=(m'-\lambda (1+A+B))/3$.

Suppose next that $m=m'k$ with $k\equiv 4 \bmod 9$, and $k$ is either a prime $p\equiv 4 \bmod 9$ or the square $p^2$ of a prime $p\equiv 2 \bmod 9$. Then we can write 
$$ 3k=N((9A+2)w+(9B-2)    ), $$
with $A+B\equiv 0 \bmod 3$ or with both $A+B\equiv \pm 1 \bmod 3$, from \eqref{p=4} in Lemma~\ref{norm} when $k=p$, and trivially $3p^2=N( (9A+2)w+(-9A-2))$ when $p=9A+2$.
With
$$F= (1+yz)(\Phi(x)+yx)-(1+z)
+(Ax+ B )\Phi(y)\Phi(z)+m_1 g$$
we get $3^{19}nk$ with
$$ n=2+3A+3B+9m_1. $$
From
\begin{equation*}
\begin{split}
F &= yz^2-xy^2z+y^2+x^2yz^2-x+x^2yz-xy+x^2y^2z^2+x^2y^2\\
&\qquad + ((A+1)x+B )\Phi(y)\Phi(z) + m_1 g
\end{split}
\end{equation*}
we get $3^{19}nk$ with
\[
n=4+3A+3B+9m_1,
\]
and from 
\[
F=1+x+z(y+x+xy)-y(1+x^2) +((A-B)x- B ) \Phi(y)\Phi(z) + (m_1+B) g,
\]
and observing that  $(4+9A-9B)w+(2-9B)=(2+9A)w+(9B-2)w^2$ and so
\[
N( (4+9A-9B)w+(2-9B) )=3k,
\]
we get
$3^{19}kn$ with 
$$
n=1+3A+3B+9m_1.
$$
If $A+B\equiv 0 \bmod 3$ then as we vary $m_1$ these three polynomials produce all  the integers $n\equiv 1,2,4 \bmod 9$, and with $-F$ all the $n\equiv 8,7,5 \bmod 9$, and we achieve all $n$ coprime to $3$.
In the remaining cases we have both $A+B\equiv1$ and $ -1 \bmod 3$ and so run through the $n\equiv 5$ and $8$, $7$ and $1$,  $4$ and $7 \bmod 9$, with $-F$ adding in the missing $n\equiv 2 \bmod 9$, again  achieving all possible $3\nmid n$.

Suppose now that we try to make a determinant of the form $3^{19}m$,  $3\nmid m$, where $m$ contains no prime $p\equiv 4$ or $7 \bmod 9$  or the square of a prime congruent to $2$ or $5 \bmod 9$.  Similar to  the case of $3^{18}m$, any single primes congruent to $2$ or $5 \bmod 9$ dividing $m$ must come from $F(1,1,1)$ and, apart from a power of $(1-\omega)$ and a unit, the remaining $F(\omega^i,\omega^j,\omega^k)$ can contain only primes $p\equiv -1 \bmod 9$ or factors of  $p\equiv 1 \bmod 9$ in $\mathbb{Z}[\omega]$, and can be written in the form $1+3(1-\omega)(a\omega+b)$. This time we are not guaranteed $3\mid a+b$.  To add an extra 3 over $3^{18}$, we either have (Case~1) an extra $3\mid F(1,1,1)$, or (Case~2) an extra $(1-\omega)$ divides one of the $F(1,\omega^j,\omega^k)$ and its conjugate.
For Case~1, we can assume that  $3^2\parallel F(1,1,1)$ and for the remaining values that $3\parallel F(1,\omega^j,\omega^k)$ and $(1-\omega)\parallel F(\omega,\omega^j,\omega^k)$.
For Case~2, we can assume that $3\parallel F(1,1,1)$, $3(1-\omega)\parallel F(1,1,\omega)$, $3\parallel F(1,\omega,\omega^k)$, and $(1-\omega)\parallel F(\omega,\omega^j,\omega^k)$. Now obtaining integer coefficients from \eqref{RecoverCoeff} depends only on the values of the $F(\omega^i,\omega^j,\omega^k) \bmod 27$ and, since we are summing over full sets of conjugates, it is enough to check the values mod $9(1-\omega)$.
Notice that 
\begin{align*} (1-\omega) \left(1+3(1-\omega)(a\omega+b)\right)&\equiv 1-\omega -9(a+b) \text{ mod } 9(1-\omega),\\
3 \left(1+3(1-\omega)(a\omega+b)\right) & \equiv 3 \text{ mod } 9(1-\omega). 
\end{align*}
Hence, allowing for the three possibilities $a+b\equiv 0,\pm 1 \bmod 3$, and replacing $F$ by $-F$ as necessary, we can assume in Case~1 that
\[
F(1,1,1)=9, \quad
F(1,\omega,\omega^k)=3u_k, \; k=0,1,2, \quad F(1,1,\omega)=3u,
\]
and
\[
F(\omega,\omega^j,\omega^k)=(1-\omega)u_{jk} \text{ or } -(8+\omega)u_{jk} \text{ or } (10-\omega) u_{jk}, \;\; j,k=0,1,2,
\]
where the $u,u_i,u_{jk}$ are units.
Similarly for Case~2 we can reduce to
\[
F(1,1,1)=3n,\;n=1,2 \text{ or } 4, \;\;
F(1,\omega,\omega^k)=3u_k, \; k=0,1,2,\;\; F(1,1,\omega)=3(1-\omega)u,
\]
and
\[
F(\omega,\omega^j,\omega^k)=(1-\omega)u_{jk} \text{ or } -(8+\omega)u_{jk} \text{ or } (10-\omega) u_{jk}, \;\; j,k=0,1,2.
\]
Again we can assume that $u_{00},u_0,u$ are $\pm 1$ and that the $u_{jk}$ are all in $\mathscr{U}_1$ or all in $\mathscr{U}_2$.
In Case~1 we can assume that two of $u,u_0,u_1,u_2$ are in $\mathscr{U}_1$ and two are in $\mathscr{U}_2$.
In Case~2 when $n=1$ or $4$  we can assume that two of $u_0,u_1,u_2$ are $\mathscr{U}_1$ and one is in $\mathscr{U}_2$ and when $n=2$ we must have one in $\mathscr{U}_1$ and two in $\mathscr{U}_2$. This produces
$2\cdot 3^8\cdot 3^9\cdot \binom{4}{2} \cdot 3^2=2^2\cdot 3^{20}$ possibilities to check in Case~1 and $3\cdot 2\cdot 3^8\cdot 3^9 \cdot 2\cdot 3\cdot 3^2=2^2\cdot 3^{21}$ in Case~2.
An exhaustive computation found that none of these $55,788,550,416$ choices produced all integer $a_{IJK}$. 
\end{proof}

\bibliographystyle{amsplain}

\begin{bibdiv}
\begin{biblist}

\bib{BPP24}{article}{
   author={Bautista Serrano, Humberto},
   author={Paudel, Bishnu},
   author={Pinner, Christopher},
   title={The integer group determinants do not determine the group},
   journal={Comb. Number Theory},
   volume={13},
   date={2024},
   number={1},
   pages={59--65},
   issn={2996-2196},
   review={\MR{4716468}},
   doi={10.2140/cnt.2024.13.59},
}

\bib{BPP25}{article}{
   author={Bautista Serrano, Humberto},
   author={Paudel, Bishnu},
   author={Pinner, Christopher},
   title={The integer group determinants for $\mathrm{GA}(1,p)$ and related semidirect products},
   date={11 Jan 2025},
   note={2401.02657v2 [math.NT]},
}

\bib{BoerkoelPinner}{article}{
   author={Boerkoel, Ton},
   author={Pinner, Christopher},
   title={Minimal group determinants and the Lind-Lehmer problem for dihedral groups},
   journal={Acta Arith.},
   volume={186},
   date={2018},
   number={4},
   pages={377--395},
   issn={0065-1036},
   review={\MR{3879399}},
   doi={10.4064/aa180708-30-8},
}

\bib{CP3}{article}{
   author={Clem, Stian},
   author={Pinner, Christopher},
   title={The Lind-Lehmer constant for 3-groups},
   journal={Integers},
   volume={18},
   date={2018},
   pages={Paper No. A40, 20},
   review={\MR{3794030}},
}

\bib{pgroups}{article}{
   author={De Silva, Dilum},
   author={Mossinghoff, Michael J.},
   author={Pigno, Vincent},
   author={Pinner, Christopher},
   title={The Lind-Lehmer constant for certain $p$-groups},
   journal={Math. Comp.},
   volume={88},
   date={2019},
   number={316},
   pages={949--972},
   issn={0025-5718},
   review={\MR{3882290}},
   doi={10.1090/mcom/3350},
}

\bib{DeSilvaPinner}{article}{
   author={DeSilva, Dilum},
   author={Pinner, Christopher},
   title={The Lind Lehmer constant for $\mathbb{Z}_p^n$},
   journal={Proc. Amer. Math. Soc.},
   volume={142},
   date={2014},
   number={6},
   pages={1935--1941},
   issn={0002-9939},
   review={\MR{3182012}},
   doi={10.1090/S0002-9939-2014-11954-X},
}

\bib{FormanekSibley}{article}{
   author={Formanek, Edward},
   author={Sibley, David},
   title={The group determinant determines the group},
   journal={Proc. Amer. Math. Soc.},
   volume={112},
   date={1991},
   number={3},
   pages={649--656},
   issn={0002-9939},
   review={\MR{1062831}},
   doi={10.2307/2048685},
}

\bib{Kaiblinger10}{article}{
   author={Kaiblinger, Norbert},
   title={On the Lehmer constant of finite cyclic groups},
   journal={Acta Arith.},
   volume={142},
   date={2010},
   number={1},
   pages={79--84},
   issn={0065-1036},
   review={\MR{2601051}},
   doi={10.4064/aa142-1-7},
}

\bib{Kaiblinger12}{article}{
   author={Kaiblinger, Norbert},
   title={Progress on Olga Taussky-Todd's circulant problem},
   journal={Ramanujan J.},
   volume={28},
   date={2012},
   number={1},
   pages={45--60},
   issn={1382-4090},
   review={\MR{2914452}},
   doi={10.1007/s11139-011-9354-6},
}

\bib{Laquer}{article}{
   author={Laquer, H. Turner},
   title={Values of circulants with integer entries},
   conference={
      title={A collection of manuscripts related to the Fibonacci sequence},
   },
   book={
      publisher={Fibonacci Assoc., Santa Clara, CA},
   },
   date={1980},
   pages={212--217},
   review={\MR{0624127}},
}

\bib{Lehmer}{article}{
   author={Lehmer, D. H.},
   title={Factorization of certain cyclotomic functions},
   journal={Ann. of Math. (2)},
   volume={34},
   date={1933},
   number={3},
   pages={461--479},
   issn={0003-486X},
   review={\MR{1503118}},
   doi={10.2307/1968172},
}

\bib{Lind}{article}{
   author={Lind, Douglas},
   title={Lehmer's problem for compact abelian groups},
   journal={Proc. Amer. Math. Soc.},
   volume={133},
   date={2005},
   number={5},
   pages={1411--1416},
   issn={0002-9939},
   review={\MR{2111966}},
   doi={10.1090/S0002-9939-04-07753-6},
}

\bib{Mahoney}{article}{
   author={Mahoney, Michael K.},
   title={Determinants of integral group matrices for some nonabelian $2$-generator groups},
   journal={Linear and Multilinear Algebra},
   volume={11},
   date={1982},
   number={2},
   pages={189--201},
   issn={0308-1087},
   review={\MR{0650732}},
   doi={10.1080/03081088208817443},
}

\bib{MahoneyNewman}{article}{
   author={Mahoney, Michael K.},
   author={Newman, Morris},
   title={Determinants of abelian group matrices},
   journal={Linear and Multilinear Algebra},
   volume={9},
   date={1980},
   number={2},
   pages={121--132},
   issn={0308-1087},
   review={\MR{0590367}},
   doi={10.1080/03081088008817358},
}

\bib{MPP19}{article}{
   author={Mossinghoff, Michael J.},
   author={Pigno, Vincent},
   author={Pinner, Christopher},
   title={The Lind-Lehmer constant for $\mathbb{Z}_2^r\times\mathbb{Z}_4^s$},
   journal={Mosc. J. Comb. Number Theory},
   volume={8},
   date={2019},
   number={2},
   pages={151--162},
   issn={2220-5438},
   review={\MR{3959883}},
   doi={10.2140/moscow.2019.8.151},
}

\bib{MP23}{article}{
   author={Mossinghoff, Michael J.},
   author={Pinner, Christopher},
   title={Prime power order circulant determinants},
   journal={Illinois J. Math.},
   volume={67},
   date={2023},
   number={2},
   pages={333--362},
   issn={0019-2082},
   review={\MR{4593894}},
   doi={10.1215/00192082-10596890},
}

\bib{Heisenberg}{article}{
   author={Mossinghoff, Michael J.},
   author={Pinner, Christopher},
   title={The integer group determinants for the Heisenberg group of order $p^3$},
   journal={Michigan Math. J.},
   volume={74},
   date={2024},
   number={3},
   pages={551--569},
   issn={0026-2285},
   review={\MR{4767505}},
   doi={10.1307/mmj/20216124},
}

\bib{Newman1}{article}{
   author={Newman, Morris},
   title={On a problem suggested by Olga Taussky-Todd},
   journal={Illinois J. Math.},
   volume={24},
   date={1980},
   number={1},
   pages={156--158},
   issn={0019-2082},
   review={\MR{0550657}},
}

\bib{Newman2}{article}{
   author={Newman, Morris},
   title={Determinants of circulants of prime power order},
   journal={Linear and Multilinear Algebra},
   volume={9},
   date={1980},
   number={3},
   pages={187--191},
   issn={0308-1087},
   review={\MR{0601702}},
   doi={10.1080/03081088008817367},
}

\bib{OP26}{article}{
   author={Ostergaard, Andrew},
   author={Pinner, Christopher},
   title={The integer group determinants for $\mathrm{GA}(1,q)$},
   date={8 Sep 2026},
   note={arXiv:2501.07037v2 [math.NT]},
}

\bib{Panraksa25}{article}{
   author={Panraksa, Chatchawan},
   title={The 5-divisible integer group determinants for the elementary abelian group of order 25},
   journal={Ramanujan J.},
   volume={70},
   date={2026},
   note={Article 66},
   pages={6 pp.},
   doi={10.1007/s11139-026-01440-3},
}

\bib{Panraksa49}{article}{
   author={Panraksa, Chatchawan},
   title={Integer group determinants for the elementary abelian group of order 49},
   date={8 Sep 2026},
   note={arXiv:2609.09542 [math.NT]},
   pages={19 pp.}
}

\bib{Panraksa24}{article}{
   author={Panraksa, Chatchawan},
   title={Integer group determinants for abelian groups of order 24},
   date={9 Sep 2026},
   note={arXiv:2609.10423 [math.NT]},
   pages={30 pp.}
}

\bib{PP15}{article}{
   author={Paudel, Bishnu},
   author={Pinner, Christopher},
   title={Integer circulant determinants of order 15},
   journal={Integers},
   volume={22},
   date={2022},
   pages={paper no. A4, 21 pp.},
   review={\MR{4363104}},
}

\bib{dicyclic}{article}{
   author={Paudel, Bishnu},
   author={Pinner, Christopher},
   title={Minimal group determinants for dicyclic groups},
   journal={Mosc. J. Comb. Number Theory},
   volume={10},
   date={2021},
   number={3},
   pages={235--248},
   issn={2220-5438},
   review={\MR{4313424}},
   doi={10.2140/moscow.2021.10.235},
}

\bib{PaudelPinnerZnxH}{article}{
   author={Paudel, Bishnu},
   author={Pinner, Christopher},
   title={The group determinants for $\mathbb{Z}_n \times H$},
   journal={Notes Number Theory Discrete Math.},
   volume={29},
   number={3},
   date={2023},
   pages={603--619}, 
}

\bib{PPQ16}{article}{
   author={Paudel, Bishnu},
   author={Pinner, Christopher},
   title={The integer group determinants for $Q_{16}$},
   date={22 Feb 2023},
   note={arXiv:2302.11688 [math.NT]},
   pages={5 pp.},
}

\bib{PaudelPinner25}{article}{
   author={Paudel, Bishnu},
   author={Pinner, Chris},
   title={The integer group determinants for the groups of order 18},
   journal={Ramanujan J.},
   volume={68},
   date={2025},
   number={4},
   pages={paper no. 102, 23 pp.},
   issn={1382-4090},
   review={\MR{4989500}},
   doi={10.1007/s11139-025-01249-6},
}

\bib{Pigno1}{article}{
   author={Pigno, Vincent},
   author={Pinner, Christopher},
   title={The Lind-Lehmer constant for cyclic groups of order less than 892,371,480},
   journal={Ramanujan J.},
   volume={33},
   date={2014},
   number={2},
   pages={295--300},
   issn={1382-4090},
   review={\MR{3165542}},
   doi={10.1007/s11139-012-9443-1},
}

\bib{PPV16}{article}{
   author={Pigno, Vincent},
   author={Pinner, Christopher},
   author={Vipismakul, Wasin},
   title={The Lind-Lehmer constant for $\mathbb{Z}_m\times\mathbb{Z}^n_p$},
   journal={Integers},
   volume={16},
   date={2016},
   pages={Paper No. A46, 12},
   review={\MR{3522826}},
}

\bib{PinnerS4}{article}{
   author={Pinner, Christopher},
   title={The integer group determinants for the symmetric group of degree four},
   journal={Rocky Mountain J. Math.},
   volume={49},
   date={2019},
   number={4},
   pages={1293--1305},
   issn={0035-7596},
   review={\MR{3998922}},
   doi={10.1216/RMJ-2019-49-4-1293},
}

\bib{smallgps}{article}{
   author={Pinner, Christopher},
   author={Smyth, Christopher},
   title={Integer group determinants for small groups},
   journal={Ramanujan J.},
   volume={51},
   date={2020},
   number={2},
   pages={421--453},
   issn={1382-4090},
   review={\MR{4056860}},
   doi={10.1007/s11139-018-0092-x},
}

\bib{OTT}{article}{
    author={Olga Taussky Todd},
    title={Integral group matrices},
    journal={Notices Amer. Math. Soc.},
    volume={24},
    number={3},
    date={1977},
    pages={A-345},
    note={Abstract no. 746-A15, 746th Meeting, Hayward, CA, Apr. 22--23, 1977},
}

\bib{YYLaquer23}{article}{
   author={Yamaguchi, Naoya},
   author={Yamaguchi, Yuka},
   title={Remark on Laquer's theorem for circulant determinants},
   journal={Int. J. Group Theory},
   volume={12},
   date={2023},
   number={4},
   pages={265--269},
   issn={2251-7650},
   review={\MR{4526227}},
   doi={10.22108/IJGT.2022.133217.1791},
}

\bib{YYC8C2}{article}{
   author={Yamaguchi, Naoya},
   author={Yamaguchi, Yuka},
   title={Generalized Dedekind's theorem and its application to integer group determinants},
   journal={J. Math. Soc. Japan},
   volume={76},
   date={2024},
   number={4},
   pages={1123--1138},
   issn={0025-5645},
   review={\MR{4814714}},
   doi={10.2969/jmsj/90539053},
}

\bib{YYCirc16}{article}{
   author={Yamaguchi, Yuka},
   author={Yamaguchi, Naoya},
   title={Integer circulant determinants of order 16},
   journal={Ramanujan J.},
   volume={61},
   date={2023},
   number={4},
   pages={1283--1294},
   issn={1382-4090},
   review={\MR{4613461}},
   doi={10.1007/s11139-022-00599-9},
}

\bib{YYC24}{article}{
   author={Yamaguchi, Yuka},
   author={Yamaguchi, Naoya},
   title={Integer group determinants for $\rm C_2^4$},
   journal={Ramanujan J.},
   volume={62},
   date={2023},
   number={4},
   pages={983--995},
   issn={1382-4090},
   review={\MR{4667334}},
   doi={10.1007/s11139-023-00727-z},
}

\bib{YYC422}{article}{
   author={Yamaguchi, Yuka},
   author={Yamaguchi, Naoya},
   title={Integer group determinants for abelian groups of order 16},
   journal={Hiroshima Math. J.},
   volume={54},
   date={2024},
   number={3},
   pages={359--373},
   issn={0018-2079},
   review={\MR{4822670}},
   doi={10.32917/h2023012},
}

\bib{YYC42}{article}{
   author={Yamaguchi, Yuka},
   author={Yamaguchi, Naoya},
   title={Integer group determinants for $\rm C^2_4$},
   journal={Integers},
   volume={24},
   date={2024},
   pages={Paper No. A30, 21},
   review={\MR{4718924}},
   doi={10.5281/zenodo.10821710},
}

\bib{YYNA16}{article}{
   author={Yamaguchi, Yuka},
   author={Yamaguchi, Naoya},
   title={Integer group determinants for three of the non-abelian groups of order 16},
   journal={Res. Number Theory},
   volume={10},
   date={2024},
   number={2},
   pages={paper no. 23, 19 pp.},
   issn={2522-0160},
   review={\MR{4713837}},
   doi={10.1007/s40993-024-00508-7},
}

\bib{YY16}{article}{
   author={Yamaguchi, Yuka},
   author={Yamaguchi, Naoya},
   title={Integer group determinants of order 16},
   journal={Ramanujan J.},
   volume={65},
   date={2024},
   number={3},
   pages={1459--1474},
   issn={1382-4090},
   review={\MR{4810521}},
   doi={10.1007/s11139-024-00946-y},
}

\end{biblist}
\end{bibdiv}

\end{document}